\newtheorem*{thm*}{Theorem}
\numberwithin{equation}{section}
\newtheorem{thm}[equation]{Theorem}
\newtheorem{lem}[equation]{Lemma}
\newtheorem{cor}[equation]{Corollary}
\newtheorem{prop}[equation]{Proposition}
\theoremstyle{definition}
\newtheorem{ex}[equation]{Example}	
\newtheorem{remark}[equation]{Remark}
\newcommand{\ZZ}{\mathbb{Z}}
\newcommand{\NN}{\mathbb{N}}
\newcommand{\cosbase}[3]{#1 \to_{#3} #2}
\newcommand{\cosp}{\cosbase{A}{B}{\rho}}
\begin{document}

\title{On Isospectral Integral Circulant Graphs}
\author{Yan X Zhang \\ San Jos\'{e} State University}
\maketitle

\begin{abstract}
Understanding when two non-isomorphic graphs can have the same spectra is a classic problem that is still not completely understood, even for integral circulant graphs. We say that a natural number $N$ satisfies the \emph{integral spectral Ad\`{a}m property (ISAP)} if any two integral circulant graphs of order $N$ with the same spectra must be isomorphic. It seems to be open whether all $N$ satisfy the ISAP; M\"{o}nius and So showed that $N$ satisfies the ISAP if $N = p^k, pq^k,$ or $pqr$. We show that: (a) for any prime factorization structure $N = p_1^{a_1}\cdots p_k^{a_k}$, $N$ satisfies the ISAP for ``most'' values of the $p_i$;  (b) $N=p^2q^n$ satisfy the ISAP if $p,q$ are odd and $(q-1) \nmid (p-1)^2(p+1)$; (c) all $N =p^2q^2$ satisfy the ISAP. 
\end{abstract}

\section{Introduction}

This work is primarily motivated by the following conjecture given by So in \cite{so2006integral}: ``There are exactly $2^{\tau(N) - 1}$ non-isospectral integral circulant graphs of
order $N$, where $\tau(N)$ is the number of divisors of $N$.'' M\"{o}nius and So \cite{monius2023many}'s work proves this conjecture for:
\begin{itemize}
\item $N = p^k$, where $p$ is a prime $p \geq 2$;
\item $N = pq^k$ or $p^2 q$ with primes $q > p \geq 2$ and integer $k \geq 1$;
\item $N = pqr$ with primes $r > q > p \geq 2$.
\end{itemize}
Otherwise, the conjecture seems to be open.

One way of looking at this conjecture is as an attempt to better understand integral circulant graphs. Circulant graphs of order $N$ are defined by their \emph{symbol} $S \subset \ZZ/N\ZZ$, the set of column indices corresponding to nonzero elements of the first row of the graph's adjacency matrix. In an \textbf{integral} circulant graph, the symbol's information can be compressed into the \emph{integral symbol}, which is a subset of $\{d \colon 1 < d < N, d|N\}$; the main idea is that different indices $k$ in the symbol with the same $\gcd(k,N)$ must occur together or not at all. Thus, there are $2^{\tau(N) - 1}$ possible integral symbols for integral circulant graphs. The authors of \cite{monius2023many} show that for these $N$, different integral symbols must have different spectra. Thus, there must be $2^{\tau{N}-1}$ different spectra for these $N$. In this light, we can also think of So's conjecture as a strengthening of a very similar result:
\begin{thm*}[Klin and Kovacs \cite{klin2012automorphism}]
    There are exactly $2^{\tau(N) - 1}$ non-isomorphic integral circulant graphs of order $N$. 
\end{thm*}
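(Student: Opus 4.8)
The plan is to prove the equivalent statement that the integral symbol is a complete isomorphism invariant: two integral circulant graphs of order $N$ are isomorphic if and only if they have the same integral symbol. Since there are exactly $2^{\tau(N)-1}$ integral symbols and every integral circulant graph determines one, this immediately gives the count. The ``if'' direction is trivial, so the whole content is the ``only if'': distinct integral symbols force non-isomorphic graphs. A first useful observation is that the obvious isomorphisms --- translations $x \mapsto x + t$ and multipliers $x \mapsto ax$ with $a \in (\ZZ/N\ZZ)^\times$ --- all preserve the integral symbol. Indeed, for a unit $a$ we have $\gcd(ak, N) = \gcd(k, N)$, so multiplication by $a$ fixes each gcd-class $G_d = \{k : \gcd(k,N) = d\}$ setwise, and hence fixes the symbol $\mathcal{D} \subseteq \{d : d \mid N,\ d < N\}$ of every integral circulant graph. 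Thus the theorem is really the assertion that there are no \emph{sporadic} isomorphisms, i.e.\ none that mixes different gcd-classes.

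The framework I would use is that of association schemes and Schur rings. The gcd-classes $G_d$ are precisely the orbits of the multiplicative action of $(\ZZ/N\ZZ)^\times$ on $\ZZ/N\ZZ$, so the relations $R_d = \{(x,y) : \gcd(x-y, N) = d\}$ form a commutative association scheme $\mathcal{C}_N$ --- the \emph{cyclotomic} (or gcd) scheme --- whose Schur ring is exactly the span of the $G_d$. Every integral circulant graph is, by definition, a union of basic relations $R_d$ of $\mathcal{C}_N$, and conversely. The Chinese Remainder Theorem gives $\ZZ/N\ZZ \cong \prod_i \ZZ/p_i^{a_i}\ZZ$ and exhibits $\mathcal{C}_N$ as a tensor-type product of the prime-power schemes $\mathcal{C}_{p_i^{a_i}}$, where on $\ZZ/p^a\ZZ$ the basic relations are simply the $p$-adic valuation relations $x \sim y \iff v_p(x-y) = b$. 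In this prime-power case the scheme has a transparent nested block structure, and one can check by hand --- via degree sequences and the induced partition into residue blocks --- that distinct unions of valuation-relations give non-isomorphic graphs.

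The crux, which is the main obstacle, is to upgrade this to all $N$: one must show that the cyclotomic scheme $\mathcal{C}_N$ has the \emph{CI-property} for its sub-Cayley-graphs, i.e.\ that every isomorphism between two integral circulant graphs is induced by an element of the setwise stabilizer of the scheme that permutes the basic relations, and in fact \emph{fixes} each $R_d$. This cannot follow from the classical \adama\ conjecture, which is false for general circulant graphs and for most $N$; it relies instead on the special rigidity of the orbit scheme of the full automorphism group $(\ZZ/N\ZZ)^\times$. The way to establish it is through the structure theory of Schur rings over cyclic groups --- the decomposition results of Leung--Man and Muzychuk's solution of the isomorphism problem for circulant graphs --- which show that the relevant normalizer acts on $\mathcal{C}_N$ only through multipliers and translations. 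Combined with the first paragraph, where multipliers and translations were seen to fix symbols, this forces any isomorphism to preserve the integral symbol, completing the proof. I expect controlling the interaction between the prime-power factors --- ruling out isomorphisms that permute gcd-classes sharing the same block sizes across different primes --- to be the delicate technical point, and the reason the general theory of S-rings, rather than an elementary counting argument, is needed.
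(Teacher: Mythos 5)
First, a point of comparison that matters here: the paper does not prove this statement at all. It is quoted as a known theorem with a citation to Klin and Kov\'acs \cite{klin2012automorphism}, so there is no internal proof to measure your attempt against; your proposal has to stand on its own. Its correct parts are the framing and the easy half: the theorem is equivalent to the integral symbol being a complete isomorphism invariant, multipliers and translations do preserve gcd-classes, and the prime-power case really is elementary --- for $N=p^a$ the valency of $ICG_N(D)$ is $\sum_{d\in D}\phi(N/d)$, and the numbers $\phi(p^a),\ldots,\phi(p)$ form a super-increasing sequence, so distinct symbols already give graphs of different degree.

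The gap is that the entire hard half --- no ``sporadic'' isomorphisms --- is asserted rather than proved, and the bridging claim you lean on is false as stated. Two concrete problems. (i) An isomorphism between two integral circulant graphs has no a priori reason to respect the cyclotomic scheme $\mathcal{C}_N$: it only respects the coherent (Weisfeiler--Leman) closure of the two graphs, which is in general strictly coarser than $\mathcal{C}_N$ (any transposition is an automorphism of the complete graph $ICG_N(\{d: d\mid N, d<N\})$ but does not preserve gcd-classes of differences), so one cannot speak of it ``permuting the basic relations'' without a replacement argument of CI type, which is exactly what needs proof. (ii) The claim that the relevant normalizer ``acts on $\mathcal{C}_N$ only through multipliers and translations'' is not a statement of Leung--Man or Muzychuk's theory, and it is not true: rational circulant graphs generically have enormous non-affine automorphism groups --- already $ICG_{p^2}(\{p\})$ is a disjoint union of $p$ copies of $K_p$ with automorphism group $S_p \wr S_p$, and in general one gets generalized wreath products along the divisor lattice. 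Computing these groups and showing that, despite their size, the symbol is still an isomorphism invariant is precisely the content of the Klin--Kov\'acs paper; it is a corollary of that computation, not of Muzychuk's isomorphism criterion quoted as a black box (extracting symbol-invariance from that criterion for rational connection sets is itself a nontrivial argument you have not made). Finally, the case you flag at the end --- distinct divisors $d_1\neq d_2$ with $\phi(N/d_1)=\phi(N/d_2)$, where a class-permuting isomorphism could a priori live --- is exactly the delicate point, and it is left untouched. So the proposal is a correct reduction plus an honest map of where the difficulty lies, but not a proof.
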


Another way of looking at this conjecture is as a variation on the ``Ad\'{a}m property''. As in \cite{mans2002spectral}, we say that a symbol $S \subset \ZZ/N\ZZ$ has the \emph{Ad\'{a}m property} if it is isomorphic to another symbol $T$ if and only if $S$ and $T$ are \emph{proportional} (that is, one can obtain one symbol to another by multiplying by a common element of $(\ZZ/N\ZZ)^*$). It is natural to say that $N \in \NN$ satisfies the Ad\'{a}m property if all symbols $S \subset \ZZ/N\ZZ$ satisfy the \emph{Ad\'{a}m} property. Ad\'{a}m conjectured \cite{adam1981research} that all natural numbers $N$ satisfy the Ad\'{a}m property; that is, all pairs of isomorphic circulant graphs must have proportional symbols. Several classes of counterexamples were found, such as by Elspas and Turner for $n=16$ \cite{elspas1970graphs} or Alspach \cite{alspach1979isomorphism} for broader classes of $N$. However, the conjecture was also shown to be true for many $N$; for example, Muzychuk proved that the conjecture holds for squarefree $N$ \cite{muzychuk1995adam} and double squarefree $N$ \cite{muzychuk1997adam}.

One natural extension of the Ad\'{a}m property is the following: as in \cite{mans2002spectral}, we say that $N$ satisfies the \emph{spectral Ad\'{a}m property} if two symbols $S, T \subset \ZZ/N\ZZ$ have the same spectrum if and only if $S$ and $T$ are proportional. When we specialize to integral circulant graphs, proportionality is equivalent to equality, because multiplication by an element of $(\ZZ/N\ZZ)^*$ fixes the greatest common divisor with $N$. It is then natural to say that $N$ satisfies the \emph{integral spectral Ad\'{a}m property (ISAP)} if two integral symbols have the same spectrum if and only if they are equal. Note that any $N$ that satisfies the ISAP must have $2^{\tau{N}-1}$ different spectra. In this light, we can reinterpret M\"onius and So's work as proving that $N = p^k, pq^k, pqr$ satisfy the ISAP.

After a quick review and some notation in Section~\ref{sec:prelim}, we start our work in Section~\ref{sec:spectral}, where we introduce some structure that will help us visualize and  manipulate the spectra of integral circulant graphs. Our main contribution here is observing the structural simplicity of $G_N(d)$ for $N=p^n$ and then exploiting the multiplicative structure of the well-known $\mu$ and $\phi$ functions. In Section~\ref{sec:NAR}, we turn our goal of understanding the ISAP into looking for the existence of \emph{nontrivial additive relations (NARs)} that exist on products of $\phi$. In Section~\ref{sec:general} we give ``weak but general'' results that apply to many $N$, but with many assumptions on $N$. In Section~\ref{sec:p2qk}, we prove the $N=p^2q^{n_2}$ case under fewer assumptions. In Section~\ref{sec:p2q2} we give our main ``narrow but strong'' result that solves $N=p^2q^2$ completely. We end with some remarks in Section~\ref{sec:conclusion}.

\section{Preliminaries}
\label{sec:prelim}

\subsection{Additive Relations and NARs}

Given a vector $v$ with some index set $S$, we use $v[s]$ to denote the entry corresponding to $s \in S$ in $v$. For matrices $M$, we use $M[s, t]$ to denote the entry in row $s$ and column $t$.

For a vector $v$, we define an \emph{additive relation on $v$} to be a relation of the form 
\[
  \sum_{x \in S_1} v[x] = \sum_{x \in S_2} v[x]
\]
for distinct subsets $S_1, S_2 \subset S$. Sometimes it will be more convenient to rewrite it as a single equation
\[
\sum_i a_i v[i] = 0,
\]
where each $a_i$ is in $\{-1, 0, 1\}$ depending on whether $v[i]$ appears only on the left, on both sides, or only on the right respectively. We call this the \emph{one-sided version} of the additive relation.

Furthermore,
\begin{enumerate}
    \item we say that an additive relation $X$ is \emph{nontrivial} if $S_1 \neq S_2$. We abbreviate a nontrivial additive relation\footnote{In our context it is important to be careful. For example, if our vector is $v = [0, 0, 2]$ indexed by $[1, 2, 3]$, then $v[0] = v[1]$ is a NAR because the indices are different, even if the values are the same.} as \emph{NAR}.
    \item if $S_1$ and $S_2$ are disjoint and nonempty, we call the (necessarily nontrivial) additive relation a \emph{disjoint NAR}. Any NAR $X$ on non-equal $S_1$ and $S_2$ creates a disjoint NAR $d(X)$ if we remove the intersection $S_1 \cap S_2$. We call $d(X)$  the \emph{reduction} of $X$ and say that $X$ \emph{reduces} to $d(X)$.
    \item we say that an additive relation $X$ \emph{involves (an index)} $x$ (equivalently, $x$ \emph{is involved in} $X$) if $x$ appears in the equation as $x \in S_1$ or $x \in S_2$. We similarly say that $X$ \emph{involves (a value)} $y$ if $v[x] = y$ for some index $x$ involved in $X$. We will often just say ``involves'' when the context is clear. Also, we say that an index (or value) is \emph{involved nontrivially} in a NAR $X$ if the corresponding value does not only appear on one side of $X$ (equivalently, the corresponding value is not involved in the reduction $d(X)$. 
\end{enumerate}

In \cite{so2006integral}, So defines a \emph{super sequence} to be a sequence of natural numbers $a_1 < a_2 < \cdots < a_k$ such that for all $s < k$, $a_{s+1} > \sum_{i=1}^s a_s$. It is easy to observe that

\begin{prop}
\label{prop:super-sequence}
There cannot exist NARs on a super sequence.
\end{prop}

\subsection{A Review of Spectral Theory of Integral Circulant Graphs}
\label{sec:review}

The material in this section can be found in \cite{so2006integral} and \cite{monius2023many}. A \emph{circulant graph} $CG_N(S)$ of order $N$ is characterized by a \emph{symbol} $S$, which is a subset of $\{d: 1 \leq d < N\}$ where $i \in S$ if and only if $(N-i) \in S$. The graph is constructed by labeling the vertices $0, \ldots, N-1$ and creating an edge $(i, j)$ if and only if $i-j \in S$. We can then write down the \emph{spectrum} (eigenvalues) of $CG_N(S)$ as the multiset
\[
Sp(CG_N(S)) = \{\lambda_0(S), \lambda_1(S),...,\lambda_{N-1}(S)\}
\]
where for $0 \leq t < N$, \[
\lambda_t(S) = \sum_{j \in S} \omega^{tj}.\]

An \emph{integral circulant graph (ICG)} is a circulant graph where the spectrum consists only of integers. So \cite{so2006integral} showed that integral circulant graphs are characterized by symbols $S$ where all the indices $k$ with the same $\gcd(k, N)$ must appear at the same time or not at all. In other words, we can define $\tau(N) - 1$ \emph{basic integral symbols} $\{G_N(d): d|N, d < N\}$, where \[
G_N(d) = \{k: \gcd(N,k) = d\} \subset [N-1].
\] (we use $[k]$ to denote the set $\{1, 2, \ldots, k\}$) These $\tau(N) - 1$ basic integral symbols partition $[N-1]$. Then there are exactly $2^{\tau(N) - 1}$ integral circulant graphs of order $N$, which corresponds to a choice to include all the values in each basic integral symbol or not. We can then compute the spectrum by just adding the corresponding spectra for the basic integral symbols, because the matrices corresponding to different basic integral symbols pairwise commute.

In other words, we can use $ICG_N(D)$ to denote the integral circulant graph of order $N$ with the \emph{integral symbol} $\cup_{d \in D} G_N(d)$ (formally, $ICG_N(D) = CG_N(\cup_{d \in D} G_N(d))$). Its eigenvalues can then be computed as
\[
\lambda_t(D) = \lambda_t(\cup_{d \in D} G_N(d)) = \sum_{d \in D} \lambda_t(G_N(d)).
\]
The spectra of the basic integral symbols can then be computed with the Euler function $\phi$ and M\"obius function $\mu$ as follows: for $0 \leq t < N$, 
\begin{equation}
\label{eqn:basic}
\lambda_t (G_N(d)) = \frac{\phi(N/d)}{\phi \left(\frac{N/d}{\gcd(t, N/d)} \right)} \mu \left( \frac{N/d}{\gcd(t, N/d)} \right).
\end{equation}


\section{Spectral Theory of Integral Circulant Graphs}
\label{sec:spectral}

\subsection{The Spectral Theory for $N = p^{n}$}

Let $N = p^n$ where $p$ is a prime. Using Equation~\ref{eqn:basic}, we can characterize the spectra of the basic integral symbols (here they must be powers of $p$ as the following): 
\[
  \lambda_t(G_N(p^{n-\beta})) = \frac{\phi(p^\beta)}{\phi \left( \frac{p^\beta}{\gcd(t, p^\beta)} \right)} \mu \left( \frac{p^\beta}{\gcd(t, p^\beta)}\right),
\]
where $t$ indexes over $1 \leq t \leq N$ and  $\beta$ indexes over $0 \leq \beta \leq n$. When $\beta$ is fixed, this assigns to each $G_N(p^{n-\beta})$ a $N=p^n$-dimensional vector $\lambda$ indexed by $t$, which is the spectrum of $G_N(p^{n-\beta})$ when viewed as a multiset.

First, observe that $\gcd(t,p^x)$ only depends on $\gamma$ where $p^\gamma\| t$ (we use $p^k\|t$ to denote that $p^k|t$ and $p^{k+1} \nmid t$). This means it suffices to only consider $t = p^\gamma$. We obtain

\[
  \lambda_{p^\gamma}(G_N(p^{n-\beta})) = \frac{\phi(p^\beta)}{\phi \left( \frac{p^\beta}{p^{\min(\beta, \gamma)}}\right)} \mu\left( \frac{p^\beta}{p^{\min(\beta, \gamma)}}\right).
\]

An equivalent formulation to the above computation is
\begin{prop}
  \label{prop:lambda-values} The values  $ \lambda_{p^\gamma}(G_N(p^{n-\beta}))$ can take are: \begin{enumerate}
  \item $1$ if $\beta = 0$. Otherwise,
  \item $-p^{\beta - 1}$ if $\gamma = \beta + 1$.
  \item $\phi(p^\beta) = (p-1)p^{\beta - 1}$ if $\gamma \geq \beta$.
    \item $0$ if $\beta > \gamma + 1$.
    \end{enumerate}
  There are exactly $\phi(p^{\beta - \gamma})$ different $t$ in $[N-1]$ such that $\gcd(t, p^\beta) = p^\gamma$.  
\end{prop}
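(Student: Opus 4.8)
The plan is to prove this by directly analyzing the formula
\[
\lambda_{p^\gamma}(G_N(p^{n-\beta})) = \frac{\phi(p^\beta)}{\phi\left(p^{\beta - \min(\beta,\gamma)}\right)} \mu\left(p^{\beta - \min(\beta,\gamma)}\right),
\]
splitting into cases according to the value of $\beta - \min(\beta,\gamma)$, since both $\phi$ and $\mu$ evaluated at a prime power $p^m$ depend only on whether $m$ is $0$, $1$, or $\geq 2$. The key observation is that $\mu(p^m) = 0$ whenever $m \geq 2$, so the only surviving cases correspond to $\beta - \min(\beta,\gamma) \in \{0, 1\}$.

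First I would handle the degenerate case $\beta = 0$: then $\phi(p^0) = 1$ and $\mu(p^0) = 1$ regardless of $\gamma$, giving value $1$, which is item (1). For $\beta \geq 1$, I would next split on the comparison of $\gamma$ with $\beta$. If $\gamma \geq \beta$, then $\min(\beta,\gamma) = \beta$, so the argument of both $\phi$ and $\mu$ in the denominator is $p^0 = 1$; since $\phi(1) = \mu(1) = 1$, the value is $\phi(p^\beta) = (p-1)p^{\beta-1}$, which is item (3). If instead $\gamma < \beta$, then $\min(\beta,\gamma) = \gamma$ and the relevant argument is $p^{\beta - \gamma}$ with $\beta - \gamma \geq 1$. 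Here the subcase $\beta - \gamma = 1$ (equivalently $\gamma = \beta - 1$) gives $\phi(p)/\phi(p) \cdot \mu(p) = \phi(p^\beta) \cdot (-1)/(p-1) = -p^{\beta-1}$, matching item (2) after I reconcile the indexing; and the subcase $\beta - \gamma \geq 2$ gives $\mu(p^{\beta-\gamma}) = 0$, hence value $0$, matching item (4).

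I should flag one indexing subtlety that I expect to be the main bookkeeping obstacle: the statement lists item (2) as $\gamma = \beta + 1$ and item (4) as $\beta > \gamma + 1$, whereas my case analysis above naturally produces the nonzero negative value at $\gamma = \beta - 1$ and the zero value at $\beta - \gamma \geq 2$. This means the roles of $\beta$ and $\gamma$ in the intended reading must be tracked carefully against the convention $G_N(p^{n-\beta})$ and $t = p^\gamma$; the surviving nonzero contributions occur exactly when $\gamma$ and $\beta$ differ by at most one, with the value $-p^{\beta-1}$ arising at the boundary where a single factor of $p$ remains in the M\"obius argument. I would verify the correspondence explicitly by plugging small values (e.g. $\beta = 2$, $\gamma = 1$) to confirm which labeling the authors intend, treating the four listed cases as exhausting the possibilities for $(\beta,\gamma)$.

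Finally, for the counting claim, I would observe that $\gcd(t, p^\beta) = p^\gamma$ holds precisely for those $t$ divisible by $p^\gamma$ but not by $p^{\gamma+1}$, i.e. $t = p^\gamma u$ with $\gcd(u, p) = 1$ and $1 \le t \le N-1$; restricting to the range $[N-1]$ and comparing with $p^\beta$, the count of such residues is exactly $\phi(p^{\beta-\gamma})$ by the standard Euler-totient count of units. This part is routine once the range is pinned down, and I expect no genuine difficulty beyond confirming the endpoints of the index range match the claimed totient.
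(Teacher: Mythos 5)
Your proposal is correct and takes essentially the same route as the paper, which offers no separate proof: the proposition is presented there as an immediate case analysis of the displayed formula $\lambda_{p^\gamma}(G_N(p^{n-\beta})) = \frac{\phi(p^\beta)}{\phi\left(p^{\beta-\min(\beta,\gamma)}\right)}\,\mu\left(p^{\beta-\min(\beta,\gamma)}\right)$, splitting on whether the M\"obius argument is $1$, $p$, or a higher power, exactly as you do. Your two flagged subtleties resolve the way you suspect: the condition in item (2) is a typo for $\beta = \gamma+1$ (the paper's own table of $M(p^n)$ places the entries $-p^{\beta-1}$ exactly there), and in the counting claim the totient $\phi(p^{\beta-\gamma})$ is the count of residues modulo $p^\beta$, whereas counted over $t \in [N-1]$ with $N = p^n$ the answer is $\phi(p^{n-\gamma})$, which is the row multiplicity the paper uses everywhere else.
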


To encode this information, we can define an $(n+1) \times (n+1)$ matrix $M(N) = M(p^n)$ with rows labeled by $\gamma \in \{0, 1, \ldots, n\}$ and columns labeled by $\beta \in \{0, 1, \ldots, n \}$, where
\[
M(N)[\gamma, \beta] = \lambda_{p^\gamma}(G_N(p^{n-\beta})).
\]
(we use $M[r, c]$ to denote the entry of $M$ in row $r$ and column $c$)

\begin{tabular}{c|c|c|c|c|c|c|c|c}
  \diagbox[innerwidth=1cm, height=4ex]{$\gamma$}{$\beta$} & $0$ & $1$ & $2$ & $3$ & $4$ & $\cdots$ & $n$ &\\
  \hline
             $0$ & $1$ & $-1$ & $0$ & $0$ & $0$ & $\cdots$ & $0$ & $\times$ $\phi(p^n)$ \\
  \hline
             $1$ & $1$ & $p-1$ & $-p$ & $0$ & $0$ & $\cdots$ & $0$ & $\times \phi(p^{n-1})$ \\
  \hline
             $2$ & $1$ & $p-1$ & $(p-1) p$ & $-p^2$ & $0$ & $\cdots$ & $0$ & $\times$ $\phi(p^{n-2})$ \\
  \hline
             $3$ & $1$ & $p-1$ & $(p-1)p$ & $(p-1)p^2$ & $-p^3$ & $\cdots$ & $0$ & $\times \phi(p^{n-3})$ \\
  \hline
             $4$ & $1$ & $p-1$ & $(p-1)p$ & $(p-1)p^2$ & $(p-1)p^3$ & $\cdots$ & $0$ & $\times \phi(p^{n-4})$ \\
  \hline
  $\vdots$ & $\vdots$ & $\vdots$ & $\vdots$ & $\vdots$ & $\vdots$ & $\vdots$ & $\vdots$ & $\vdots$ \\
  \hline
  $N$ & $1$ & $\phi(p)$ & $\phi(p^2)$ & $\phi(p^3)$ & $\phi(p^4)$ & $\cdots$ & $\phi(p^n)$ & $\times \phi(1) = 1$
\end{tabular}

In $M(N)$, we say that row $i$ has \emph{(row) multiplicity} $\phi(p^{n-i})$, corresponding to the fact that the entries in that row appears $\phi(p^{n-i})$ times in each spectrum vector. 

Formally, let the \emph{extended form} matrix $\overline{M(N)}$ be the $N \times (n+1)$ matrix where the $(n+1)$ columns correspond to the spectra of the $(n+1)$ basic integral symbols for $N$. We call $M(N)$ the \emph{compact form} of $\overline{M(N)}$ since it stores the same information but with only $(n+1)$ rows, with row $i$ appearing $\phi(p^{n-i})$ times in $\overline{M(N)}$. As a sanity check,
\[\phi(1) + \phi(p) + \cdots + \phi(p^n) = p^n = N,\]
so the number of rows works out.

\begin{ex}

As an example, take $N = 2^3$. Then the compact form $M(8)$ equals (with multiplicities on the right):

\begin{center}
\begin{tabular}{c|c|c|c|c|c}
  \diagbox[innerwidth=1cm, height=4ex]{$\gamma$}{$\beta$} & $0$ & $1$ & $2$ & $3$ & \\
  \hline
             $0$ & $1$ & $-1$ & $0$ & $0$ & $\times$ $4$ \\
  \hline
             $1$ & $1$ & $1$ & $-2$ & $0$ &$\times 2$ \\
  \hline
             $2$ & $1$ & $1$ & $2$ & $-4$ & $\times 1$ \\
  \hline
             $3$ & $1$ & $1$ & $2$ & $4$ & $\times 1$ \\
  \hline
  \end{tabular}
\end{center}
We can write the extended form $\overline{M(8)}$ as 
\begin{center}
\begin{tabular}{c|c|c|c|c}
  \diagbox[innerwidth=1cm, height=4ex]{$\gamma$}{$\beta$} & $0$ & $1$ & $2$ & $3$ \\
  \hline
             $0$ & $1$ & $-1$ & $0$ & $0$  \\
  \hline
             $0$ & $1$ & $-1$ & $0$ & $0$  \\
  \hline
             $0$ & $1$ & $-1$ & $0$ & $0$  \\
  \hline
             $0$ & $1$ & $-1$ & $0$ & $0$  \\
  \hline
             $1$ & $1$ & $1$ & $-2$ & $0$  \\
  \hline
             $1$ & $1$ & $1$ & $-2$ & $0$  \\
  \hline
             $2$ & $1$ & $1$ & $2$ & $-4$ \\
  \hline
             $3$ & $1$ & $1$ & $2$ & $4$  \\
  \hline
  \end{tabular}
\end{center}

The columns (except the leftmost) of $\overline{M(8)}$ are the spectra of different $G_N(d)$'s. So each of the $8$ subsets $S$ of the $\tau(8) - 1 = 3$ columns on the right with $\beta > 0$ corresponds to a different ICG $G$; summing the columns of $\overline{M(8)}$ over $S$ produces a vector containing the spectrum of $G$.
\end{ex}

\subsection{Using the Multiplicative Structure of $\phi$ and $\mu$}

Let $N=p^{n_1}q^{n_2}$, $p^{\gamma_1} \| t$, and $q^{\gamma_2} \| t$. Observe that 
\[
\lambda_t(G_N(p^{n_1 - \beta_1}q^{n_2-\beta_2})) = \lambda_{p^{\gamma_1}}(G_{p^{n_1}}(p^{n_1 - \beta_1}))\lambda_{p^{\gamma_2}}(G_{p^{n_2}}(p^{n_2 - \beta_2})).
\]
This is because the only terms that appear in $\lambda$ are $\phi$ and $\mu$, which are multiplicative functions. As the row multiplicities are also just $\phi$ functions, this means we can obtain $M(N)$ by taking $M(p^{n_1})$ and $M(q^{n_2})$ and taking their tensor product! Formally, we define a matrix $M(N)$ whose rows and columns are both labeled by $(m_1, m_2)$ where $m_1 \in \{0, \ldots, n_1\}$ and $m_2 \in \{0, \ldots, n_2\}$, and then construct the entry
\[
M(N)[(m_1, m_2), (k_1, k_2)] = M(p^{n_1})[(m_1, k_1)] M(p^{n_2})[(m_2, k_2)]
\]
where the row $(m_1, m_2)$ in $M(N)$ has multiplicity
\[
\phi(m_1, m_2) = \phi(m_1)\phi(m_2).
\]

\begin{ex}
As an example, with
\[
  M(4) = \begin{bmatrix} 1 & -1 & 0 & (\times 2) \\ 1 & 1 & -2 & (\times 1) \\ 1 & 1 & 2 & (\times 1) \end{bmatrix};   M(9) = \begin{bmatrix} 1 & -1 & 0 & (\times 6) \\ 1 & 2 & -3 & (\times 2) \\ 1 & 2 & 6 & (\times 1) \end{bmatrix},
\]
we can obtain $M(36)$ by tensoring them to obtain a $9 \times 9$ matrix
\[
  \begin{bmatrix}
    1 & -1 & 0 & -1 & 1 & 0 & 0 & 0 & 0 & (\times 12) \\
    1 & 2 & -3 & -1 & -2 & 3 & 0 & 0 & 0  & (\times 4) \\
    1 & 2 & 6 & -1 & -2 & -6 & 0 & 0 & 0  & (\times 2) \\
    1 & -1 & 0 & 1 & 1 & 0 & -2 & 2 & 0  & (\times 6) \\
    1 & 2 & -3 & 1 & 2 & -3 & -2 & -4 & 6  & (\times 2) \\
    1 & 2 & 6 & 1 & 2 & 6 & -2 & -4 & -12  & (\times 1) \\
    1 & -1 & 0 & 1 & -1 & 0 & 2 & -2 & 0  & (\times 6) \\
    1 & 2 & -3 & 1 & 2 & -3 & 2 & 4 & -6  & (\times 2) \\
    1 & 2 & 6 & 1 & 2 & 6 & 2 & 4 & 12  & (\times 1) 
 \end{bmatrix}.
\]
The row multiplicities add up to $36$, as expected.
\end{ex}

To summarize, for $N = p_1^{n_1} p_2^{n_2}\cdots p_r^{n_r}$, we can find an $(n_1 + 1)\cdots(n_r+1) \times (n_1 + 1)\cdots(n_r + 1)$ matrix $M(N)$ with the rows and columns labeled by $(m_1, \ldots, m_r)$, where $m_i \in \{0, \ldots, n_i\}$ for all $i$. We call this common indexing set $I(N) = \{0, 1, \ldots, n_1\} \times \cdots \times \{0, 1, \ldots, n_r\}$.

For each $(m_1, \ldots, m_r)$, the numbers $\phi(p_1^{m_1}\cdots p_r^{m_r})$ appear twice. On each column $(m_1, \ldots, m_r)$, they appear as the entry in the final row $(n_1, \ldots, n_r)$. On each row $(n_1-m_1, \ldots, n_r-m_r)$, they appear as the row's multiplicity in the $N \times (n_1 + 1)\cdots(n_r + 1)$ extended form matrix $\overline{M(N)}$. We define
\[
\phi(m_1, m_2, \ldots, m_r) \coloneqq \phi(p_1^{m_1}\cdots p_r^{m_r})
\]
and
\[
P(N) \coloneqq \{\phi(m_1, \ldots, m_r): (m_1, \ldots, m_r) \in I(N)\},
\]
which we can consider to be a vector indexed by $I(N)$.

\section{Cospectral Pairs and Nontrivial Additive Relations}
\label{sec:NAR}

\subsection{Cospectral Pairs}

For $a = (m_1,\ldots, m_r) \in I(N)$ the index of some column of $M(N)$, let $v_a$ be the corresponding column of $M(N)$ and $\overline{v_a}$ be the corresponding column of $\overline{M(N)}$. For any subset $A \subset I(N)$ of the columns, we use $v_A$ to mean $\sum_{a \in A} v_a$, and similarly $\overline{v_A} = \sum_{a \in A} \overline{v_a}.$

\begin{prop}
    \label{prop:ISAP=NAR} $N$ does not satisfy the ISAP if and only if there exist $2$ different subsets $A$ and $B$ of $I(N) \backslash \{(0, \ldots, 0\}$ and a permutation $\rho \in S_{N}$ such that for all $i \in [N]$, $(\overline{v_A})_i = (\overline{v_B})_{\rho(i)}$. 
\end{prop}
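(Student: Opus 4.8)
The plan is to unwind the two definitions in play --- ``satisfies the ISAP'' and ``cospectral'' --- and observe that this proposition is essentially a restatement once the dictionary between integral symbols, column subsets, and spectra is in place. Recall from Section~\ref{sec:review} that an integral circulant graph of order $N$ is $ICG_N(D)$ for a set $D$ of divisors, equivalently a subset of the nonzero indices of $M(N)$; the index $(0,\ldots,0)$ corresponds to $d=N$ (the basic symbol $G_N(N)=\emptyset$, equivalently $\beta_i=0$ for all $i$), which contributes nothing and is excluded, matching the restriction to $I(N)\setminus\{(0,\ldots,0)\}$. The spectrum of $ICG_N(D)$, as a \emph{multiset} of $N$ eigenvalues, is exactly the multiset of entries of $\overline{v_D}$, since the extended column $\overline{v_a}$ is the spectrum of the basic symbol $G_N(d)$ and the eigenvalues add across basic symbols (the commuting-matrices remark). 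So the whole proposition reduces to the claim that two multisets of size $N$ are equal if and only if their underlying vectors agree after some permutation of the index set $[N]$.

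First I would make the multiset--permutation equivalence precise: for two vectors $u,w\in\ZZ^N$, the multisets $\{u_i\}$ and $\{w_i\}$ are equal if and only if there is $\rho\in S_N$ with $u_i=w_{\rho(i)}$ for all $i$. This is the elementary fact that two finite sequences are rearrangements of one another exactly when they induce the same multiset; I would state it in one line and cite it as standard. Applying it with $u=\overline{v_A}$ and $w=\overline{v_B}$ turns ``$Sp(ICG_N(A))=Sp(ICG_N(B))$ as multisets'' into ``$\exists \rho\in S_N$ with $(\overline{v_A})_i=(\overline{v_B})_{\rho(i)}$,'' which is precisely the displayed condition.

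Next I would handle the two directions of the iff cleanly. For the contrapositive of ISAP: by definition $N$ fails the ISAP when there exist two integral symbols that are equal as spectra but unequal as symbols. Since distinct integral symbols correspond bijectively to distinct subsets $A,B\subseteq I(N)\setminus\{(0,\ldots,0)\}$ (this bijection is exactly the $2^{\tau(N)-1}$ count recalled in Section~\ref{sec:review}), ``unequal symbols'' is ``$A\neq B$'' and ``equal spectra'' is the permutation condition above; so a witness to ISAP-failure is exactly a pair $(A,B,\rho)$ as described, and conversely. The only subtlety is to confirm that for integral circulant graphs cospectrality as defined via $\lambda_t$ (indexed by $t$) is the same as equality of the spectra \emph{as multisets}, and that isomorphism of graphs is what the ISAP compares against --- but here we only need cospectrality versus symbol-equality, and two cospectral integral circulant graphs are non-isomorphic precisely when their integral symbols differ, since the graph is determined by its integral symbol.

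The step I expect to carry the most weight, though it is conceptually light, is bookkeeping around the excluded index and the distinction between the \emph{compact} vectors $v_a$ and the \emph{extended} vectors $\overline{v_a}$: the permutation $\rho$ must act on the full index set $[N]$ of eigenvalues, so the statement is correctly phrased with $\overline{v_A},\overline{v_B}$ rather than the compact $v_A,v_B$, and I would flag that row multiplicities $\phi(\cdot)$ are exactly what reconciles the two. I do not anticipate a genuine obstacle; the content is in recognizing that the proposition is a faithful translation, so the main risk is an off-by-one or an overlooked coincidence of eigenvalue multiplicities, which I would rule out by appealing directly to the multiset formulation rather than to any finer invariant.
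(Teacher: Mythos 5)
Your proposal is correct and takes essentially the same approach as the paper, whose proof is simply a terser version of your translation: subsets of the columns of $\overline{M(N)}$ (excluding the leftmost, index $(0,\ldots,0)$) generate the possible ICG spectra by summation, and equality of spectra as multisets is exactly agreement of the vectors up to a permutation of $[N]$. One cosmetic correction: under the paper's conventions the excluded column $(0,\ldots,0)$ is the all-ones vector (the formula at $d=N$ gives $1$ for every $t$), not a zero contribution from $G_N(N)=\emptyset$ --- this is immaterial to the proposition since that column is excluded, and indeed the paper's follow-up remark relies on it being the all-ones column with sum $N$.
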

\begin{proof}
This is just a reformulation of Section~\ref{sec:spectral}; the $2^{\tau(n)-1}$ subsets of the columns of $\overline{M(N)}$ (except for the leftmost column with index $(0, \ldots, 0)$) generate the different possible spectra of integral circulant graphs with $N$ vertices by summation. Two vectors represent the same spectra if and only if they equal under some permutation.  
\end{proof}

\begin{remark}
In Proposition~\ref{prop:ISAP=NAR}, the statement holds even if we replace ``$I(N) \backslash \{(0, \ldots, 0\}$'' by ``$I(N)$.'' This is because the leftmost column in $\overline{M(N)}$ has sum $N$ (being the all $1$'s vector) while all other columns have sum $0$. If $\overline{v_A} = \overline{v_B}$ as multisets, the sums of their entries must be equal as well, which means they must either both contain the first column or both fail to contain the first column.
\end{remark}



If such $A$, $B$, $\rho$ exist, we call them a \emph{cospectral} pair denoted  by $\cosp$, and say the $\rho$ \emph{connects} $A$ to $B$. We have therefore reduced the decision problem of finding if $N$ satisfies the ISAP to the existence of cospectral pairs on $I(N)$. We will soon see that this in turn reduces to the existence of certain additive relations on $P(N)$.

\subsection{The Row NARs Induced by a Cospectral Pair}
\label{sec:row-NARs}

Given a cospectral pair $\cosp$, construct a bipartite graph $\overline{G_\rho}$ on vertices $[N] \times \{0, 1\}$, with the vertices $(*, 0)$ on the left and $(*, 1)$ on the right,  such that there exists an edge $((x, 0), (y, 1))$ if and only if $\rho(x) = y$. We then construct a similar graph $G_\rho$ on vertices $I(N) \times \{0, 1\}$ (also with the $(*, 0)$ vertices on the left and the $(*, 1)$ vertices on the right) such that there exists an edge $((x, 0), (y,1))$ if and only if there exists some $x', y' \in [N]$ where row $x'$ (resp. $y'$) in $\overline{M(N)}$ is a copy of row $x$ (resp. $y$) in $M(N)$. If we index $\overline{v_A}$ by the $(*, 0)$ and $\overline{v_B}$ by the $(0, *)$ in $\overline{G_\rho}$, we can see that edges connect (some but not necessarily all) pairs of values of the $\overline{v_*}$ with the same value. The same is true if we look at the compressed vectors $v_A$ and $v_B$. 

Consider a connected component in $G_\rho$. We can write it as $Y_1 \cup Y_2$, where $Y_1$ are the vertices on is on the left and the $Y_2$ are the vertices on the right. This lifts (given an edge $((x,0), (y,1))$ in $G_\rho$, take all the edges $((x', 0), (y', 1))$ where $x'$ are copies of $x$ and $y'$ are copies of $y$ in $\overline{G\rho}$) to some $\overline{Y_1} \cup \overline{Y_2}$ in $\overline{G_\rho}$, which must be a matching (a collection of disjoint edges) because $\overline{G\rho}$ is itself a matching. This means \[
|\overline{Y_1}| = |\overline{Y_2}|.\]
Every $(r, 0)$ in $Y_1$ accounts for $w(r)$ vertices in $\overline{Y_1}$, and similarly for $(r,1)$ in $Y_2$. Therefore, our equality translates to an additive relation $X_1$ of the form
\[
\sum_{(r, 0) \in Y_1} w(r) = \sum_{(r,1) \in Y_2} w(r)
\]
on the row weights $w(r) \in P(N)$. 

Suppose $G_\rho$ had $s$ connected components. Then iterating our process $s$ times creates $s$ additive relations $X_1, \ldots, X_s$ on $P(N)$ such that each element of $P(N)$ appears exactly once on the left and exactly once on the right among the $X_i$. In this case, we say that $\cosp$ \emph{induces} relations $X_1, \ldots, X_s$.

Suppose that for some row $r$ in $M(N)$, all copies $r'$ of $r$ in $\overline{M(N)}$ satisfy $\rho(r')=r'$. Then we say that $\rho$ \emph{fixes} $r$ and call the corresponding trivial relation $w(r) = w(r)$ \emph{fixed}. We call a $\rho$ \emph{simplified} if for all $r$ where $v_A[r] = v_B[r]$, $\rho$ fixes $r$. Then,
\begin{prop}
\label{prop:simplified}
Suppose $A \to_{\rho'} B$ is a cospectral pair. Then there exists a simplified $\rho$ such that:
\begin{enumerate}
    \item $A \to_{\rho} B$ is also a cospectral pair.
    \item $\cosp$ induces $s$ row relations $X_1, \ldots, X_s$, which are all either fixed or disjoint.
\end{enumerate} 
\end{prop}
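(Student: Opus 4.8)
The plan is to construct $\rho$ from $\rho'$ by "repairing" it on the rows where $v_A$ and $v_B$ already agree, so that $\rho$ fixes every such row while remaining a valid connector of $A$ to $B$. The key observation is the one the excerpt has already set up: a permutation $\rho' \in S_N$ connects $A$ to $B$ exactly when it matches $\overline{v_A}$ to $\overline{v_B}$ entrywise as multisets, and the matching can be organized row-block by row-block. For a fixed row $r$ of $M(N)$, the copies of $r$ in $\overline{M(N)}$ all carry the same value $v_A[r]$ in the $(*,0)$ indexing and $v_B[r]$ in the $(*,1)$ indexing. When $v_A[r] = v_B[r]$, the block of copies of $r$ on the left and the block of copies of $r$ on the right have equal size $w(r)$ and equal values, so they can be matched to each other \emph{identically}.

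First I would make precise the multiset-matching picture: because $\rho'$ is a bijection realizing $\overline{v_A} = \overline{v_B}$ up to permutation, for each attained value $c$ the total number of left-entries equal to $c$ equals the total number of right-entries equal to $c$. I then partition the left copies (and right copies) according to which row $r$ of $M(N)$ they descend from. For the set $F = \{r : v_A[r] = v_B[r]\}$ of "agreeing" rows, I would redefine $\rho$ to send each copy of $r\in F$ on the left to its corresponding copy of the same $r$ on the right via the identity within that block; this is legitimate because those two blocks have the same size $w(r)$ and the same constant value, so the entrywise-equality constraint is preserved. On the complement $I(N)\setminus F$, I need to check that the leftover left-copies and right-copies still admit a value-preserving bijection. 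This follows from a counting argument: for each value $c$, the copies I have just consumed from the left and from the right (those coming from rows in $F$ carrying value $c$) are equal in number, so the remaining left-copies and right-copies of value $c$ are still equinumerous, and any value-preserving bijection on them completes $\rho$. This establishes claim (1), that $A\to_\rho B$ is still a cospectral pair, and by construction $\rho$ fixes every $r\in F$, so $\rho$ is simplified.

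For claim (2), I would invoke the row-NAR construction from Section~\ref{sec:row-NARs} applied to this simplified $\rho$. Each connected component of $G_\rho$ yields an induced relation $X_i$ of the form $\sum_{(r,0)\in Y_1} w(r) = \sum_{(r,1)\in Y_2} w(r)$. A row $r\in F$, being fixed by $\rho$, lies in a singleton component contributing the \emph{fixed} relation $w(r)=w(r)$. For a component not of this type, I must argue the induced relation is \emph{disjoint}, i.e.\ no index appears on both sides. Since $\rho$ is simplified, any row appearing on both the $Y_1$ and $Y_2$ sides of the same relation would be a fixed row, which by definition sits alone in its own component; hence a non-fixed component's $Y_1$ and $Y_2$ share no common row index, giving disjointness.

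\textbf{The main obstacle} I expect is the non-fixed-component disjointness argument: I must rule out a component in which some row index $r$ appears on both the left and right sides without $r$ being fixed. The subtlety is that a relation being one-sided at $r$ (value only on the left or only on the right of the reduction) is what "disjoint" really demands, and $G_\rho$'s components are indexed by row labels rather than raw values, so I need to track the lifting to $\overline{G_\rho}$ carefully and use that $\overline{G_\rho}$ is a perfect matching. The cleanest way is probably to argue that if a row $r$ contributed copies to \emph{both} sides of a single relation, the simplification step would have already forced $v_A[r]=v_B[r]$ and pulled those copies out into a fixed singleton; formalizing this "already pulled out" claim, so that the remaining components are genuinely disjoint NARs, is where the care is needed.
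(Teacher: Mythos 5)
Your proposal is correct and takes essentially the same approach as the paper: redefine $\rho'$ so that it acts identically on all copies of each row $r$ with $v_A[r]=v_B[r]$, re-match the remaining copies in a value-preserving way, and then observe that any component of $G_\rho$ containing both $(r,0)$ and $(r,1)$ would force $v_A[r]=v_B[r]$, hence be an isolated fixed singleton, so every non-fixed relation is disjoint. The only cosmetic difference is that the paper performs the re-matching locally inside the affected connected components (where all entries share one value), while you justify it by a global per-value counting argument; both are valid.
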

\begin{proof}
Suppose there is some $r$ such that $v_A[r] = v_B[r]$. Then let $Y_1 \cup Y_2$ be the connected component in $G_{\rho'}$ containing $(r,0)$ and $(r,1)$. We can construct $\rho$ from $\rho'$ by just letting $\rho(x) = x$ for all copies $x$ of $r$, and re-map the other edges arbitrarily in $\overline{Y_1} \cup \overline{Y_2}$ (which does not affect the validity of $\rho'$ as all the vertices involved in this component correspond to the same value in $v_A$ or $v_B$). As a result, we have created a connected component of a single edge in two vertices $\{(r, 0) \cup (r,1)\}$ in $G_\rho'$, corresponding to a fixed relation $w(r) = w(r)$. Repeating, the remaining non-fixed row relations must then be disjoint as none of them can use both $(r,0)$ and $(r,1)$ for any $r$.
\end{proof}

\textbf{From this point on, we always assume $\rho$ is simplified.} We call the resulting disjoint relations the \emph{row NARs induced by $\cosp$.} 

\begin{ex}
    \label{ex:simplification}
We give an example of the simplification process in Figure~\ref{fig:rho}. A possible (compact form) pair of cospectral $v_A$ and $v_B$ connected by some $\rho$ is shown in Equation~\ref{eqn:simplify-example}. 

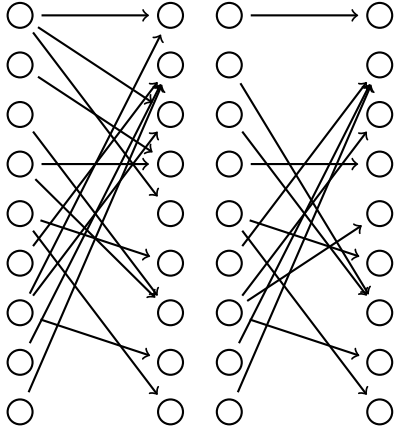
\begin{figure}
    \centering

\begin{tabular}{cc}
\begin{tikzpicture}[thick, every node/.style={draw,circle},
  every fit/.style={ellipse,draw,inner sep=-2pt,text width=2cm},
  ->,shorten >= 3pt,shorten <= 3pt, scale=0.5]
\begin{scope}[start chain=going below,node distance=3mm]
\foreach \i in {00,01,02,10,11,12,20,21,22}
  \node[on chain] (f\i) {};
\end{scope}

\begin{scope}[xshift=4cm,start chain=going below,node distance=3mm]
\foreach \i in {00,01,02,10,11,12,20,21,22}
  \node[on chain] (s\i) {};
\end{scope}

\draw (f00) -- (s00); 
\draw (f00) -- (s11);
\draw (f00) -- (s02);
\draw (f20) -- (s21);
\draw (f20) -- (s02);
\draw (f20) -- (s00);
\draw (f11) -- (s12); 
\draw (f11) -- (s22);
\draw (f01) -- (s10); 
\draw (f02) -- (s20);
\draw (f10) -- (s20);
\draw (f10) -- (s10);
\draw (f12) -- (s01); 
\draw (f21) -- (s01);
\draw (f22) -- (s01);
\end{tikzpicture} &

\begin{tikzpicture}[thick, every node/.style={draw,circle},
  every fit/.style={ellipse,draw,inner sep=-2pt,text width=2cm},
  ->,shorten >= 3pt,shorten <= 3pt, scale=0.5]

\begin{scope}[start chain=going below,node distance=3mm]
\foreach \i in {00,01,02,10,11,12,20,21,22}
  \node[on chain] (f\i) {};
\end{scope}

\begin{scope}[xshift=4cm,start chain=going below,node distance=3mm]
\foreach \i in {00,01,02,10,11,12,20,21,22}
  \node[on chain] (s\i) {};
\end{scope}

\draw (f00) -- (s00); 
\draw (f20) -- (s11);
\draw (f20) -- (s21);
\draw (f20) -- (s02);
\draw (f11) -- (s12); 
\draw (f11) -- (s22);
\draw (f01) -- (s20);
\draw (f02) -- (s20);
\draw (f10) -- (s10);
\draw (f12) -- (s01); 
\draw (f21) -- (s01);
\draw (f22) -- (s01);
\end{tikzpicture}
\end{tabular}
    \caption{Left: A possible $G_\rho$. These edges index the $36$ edges in $\overline{G_\rho}$. Right: after simplification. Any connected component involving two ``matching'' vertices will have had those two vertices isolated into a single component of their own.}
    \label{fig:rho}
\end{figure}

\begin{equation}
    \label{eqn:simplify-example}
\rho \left(
 \begin{bmatrix}
   \textbf{0} & (\times 12) \\
   1  & (\times 4) \\
   1  & (\times 2) \\
   \textbf{1}  & (\times 6) \\
   2  & (\times 2) \\
   3  & (\times 1) \\
   0  & (\times 6) \\
   3  & (\times 2) \\
   3  & (\times 1) 
 \end{bmatrix} \right) = 
  \begin{bmatrix}
   \textbf{0} & (\times 12) \\
   3  & (\times 4) \\
   0  & (\times 2) \\
   \textbf{1}  & (\times 6) \\
   0  & (\times 2) \\
   2  & (\times 1) \\
   1  & (\times 6) \\
   0  & (\times 2) \\
   2  & (\times 1) 
 \end{bmatrix}
\end{equation}

The set of values appearing in these vectors is $\{0, 1, 2, 3 \}$. It is possible to pick $\rho$ such that $G_\rho$ is as in Figure~\ref{fig:rho}. There are $4$ connected components, which induce $4$ additive relations
\begin{align*}
    \textbf{12} + 6 &= \textbf{12} + 2 + 2 + 2 \\
    4 + 2 + \textbf{6} &=  6 + \textbf{6} \\
    2 &=  1 + 1 \\
    1 + 2 + 1 &=  4
\end{align*}
over the row multiplicities. For two of these rows (which we marked in bold in Equations~\ref{eqn:simplify-example} and the relations above), the corresponding values in $v_A$ and $v_B$ equal, so we can remap $\rho$ to be the identity on those rows. Now we have $2$ (trivial) fixed relations and $4$ disjoint row NARs
\begin{align*}
    6 &= 2 + 2 + 2 \\
    4 + 2 &= 6 \\
    2 &= 1 + 1 \\
    1 + 2 + 1 &= 4 \\
    12 &= 12 \\
    6 &= 6
\end{align*}
corresponding to $6$ connected components, $2$ of which are horizontal edges. 
\end{ex}

\subsection{Consequences of Fixed Rows}
\label{sec:column-NAR}

A cospectral pair $\cosp$ (assuming a simplified $\rho$) induces some row NARs using the non-fixed rows, but the fixed rows give us information as well. First, if row $r$ is fixed, then $v_A[r] = v_B[r]$, so
\[
\sum_{c \in A} M[r, c] = \sum_{c \in B} M[r,c]
\]
gives a NAR on the values in the row. We call this the \emph{column NAR (for row $r$)}. This also holds for linear combinations of fixed rows (we skip the proof of this routine Lemma):
\begin{lem}
  \label{lem:fixed-rows-lincomb}
  In a cospectral pair $\cosp$. suppose that two rows $r_1$ and $r_2$ are both fixed. Then let $\{\delta_c \coloneqq \alpha M(N)[r_1, c] + \beta M(N)[r_2, c]\}_{c \in I(N)}$ be a linear combination of the two rows. We must have \[ \sum_{c \in A} \delta_c = \sum_{c \in B} \delta_c.\]
\end{lem}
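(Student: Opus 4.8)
The plan is to reduce everything to the single-row column NAR already recorded at the start of Section~\ref{sec:column-NAR} and then invoke linearity of finite summation. The entire content of the lemma is that the equality $\sum_{c \in A} \delta_c = \sum_{c \in B} \delta_c$ is preserved under taking scalar combinations of fixed rows, so the argument is essentially formal and requires no new ideas beyond the one-row case.

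First I would recall the defining property of a fixed row. Unwinding the definitions $v_A = \sum_{a \in A} v_a$ and $v_a[r] = M(N)[r,a]$, the statement that row $r_i$ is fixed is exactly the column NAR
\[
\sum_{c \in A} M(N)[r_i, c] = \sum_{c \in B} M(N)[r_i, c], \qquad i = 1, 2.
\]
Then I would compute $\sum_{c \in A}\delta_c$ directly: substituting $\delta_c = \alpha M(N)[r_1,c] + \beta M(N)[r_2,c]$ and splitting the finite sum gives
\[
\sum_{c \in A}\delta_c = \alpha \sum_{c \in A} M(N)[r_1,c] + \beta \sum_{c \in A} M(N)[r_2,c].
\]
Applying the two column NARs above converts each inner $A$-sum into the corresponding $B$-sum, and recombining the terms yields $\sum_{c \in B}\delta_c$, which is the claim.

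There is essentially no obstacle here, since the statement is purely an instance of linearity. The only point deserving care is the bookkeeping that ``$\rho$ fixes $r$'' really delivers the coordinate equality $v_A[r] = v_B[r]$, rather than a mere statement about the permutation acting on copies of row $r$. This is immediate from the cospectral condition $(\overline{v_A})_i = (\overline{v_B})_{\rho(i)}$: for any copy $r'$ of row $r$ we have $\rho(r') = r'$, while $(\overline{v_A})_{r'}$ and $(\overline{v_B})_{r'}$ equal the compact-form values $v_A[r]$ and $v_B[r]$ respectively, so $v_A[r] = v_B[r]$. The same computation extends verbatim to any finite number of fixed rows with arbitrary coefficients, so the two-row case stated already captures the full generality.
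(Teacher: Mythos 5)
Your proposal is correct and is exactly the routine argument the paper has in mind: the paper explicitly skips the proof of this lemma as routine, and your filling-in (the single-row column NAR $\sum_{c\in A} M(N)[r_i,c] = \sum_{c\in B} M(N)[r_i,c]$ for each fixed row, followed by linearity of finite sums) is the intended reasoning, including the careful check that ``$\rho$ fixes $r$'' yields $v_A[r] = v_B[r]$.
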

Thus, it makes sense to talk about the column NAR for e.g. $r_1 + r_2$, where $r_1$ and $r_2$ are different rows. 

\begin{lem}
\label{lem:partition} Suppose $v_1, \ldots, v_N$ are linearly independent. Then let $P = P_1 \cup P_2 \cup \cdots \cup P_k$ be a partition of $[N]$. Suppose we define $v_S$, $S \subset [N]$ to be $\sum_{s \in S} v_s$, then $V_{P_1}, \ldots, V_{P_k}$ are linearly independent as well.
\end{lem}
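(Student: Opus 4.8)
The plan is to prove the statement directly by starting from an arbitrary linear dependence among the block sums and pushing it down to a linear dependence among the original vectors, where the hypothesis can be invoked. So suppose $\sum_{i=1}^{k} c_i v_{P_i} = 0$ for some scalars $c_1, \ldots, c_k$. Expanding the definition $v_{P_i} = \sum_{s \in P_i} v_s$ turns this into a double sum $\sum_{i=1}^{k} \sum_{s \in P_i} c_i v_s = 0$.

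Next I would use the defining property of a partition. Every index $s \in [N]$ lies in exactly one block, say $P_{\iota(s)}$, and the blocks are pairwise disjoint, so no index is counted twice and the double sum collapses to a single sum $\sum_{s=1}^{N} c_{\iota(s)} v_s = 0$ over all of $[N]$. Since $v_1, \ldots, v_N$ are linearly independent by hypothesis, every coefficient must vanish, giving $c_{\iota(s)} = 0$ for all $s \in [N]$.

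Finally I would translate this back to the original coefficients $c_i$. Because the blocks of a partition are nonempty, each $P_i$ contains at least one index $s$, and for that index we have $c_{\iota(s)} = c_i = 0$. Hence $c_i = 0$ for every $i$, which is exactly the linear independence of $v_{P_1}, \ldots, v_{P_k}$.

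The argument is really just bookkeeping, so I do not expect a serious obstacle. The two points that deserve a sentence of care are exactly the two features of a partition that the proof uses: disjointness of the blocks is what lets the regrouping in the second step proceed with no cross terms, and nonemptiness of the blocks is what lets us recover each $c_i$ from the collapsed relation in the third step. (If empty blocks were permitted, the corresponding $v_{P_i}$ would be the zero vector and the claim would simply be false, so it is worth flagging that a partition here means nonempty blocks.)
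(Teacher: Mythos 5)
Your proof is correct and follows essentially the same route as the paper's: expand the dependence relation in terms of the $v_s$, use disjointness of the blocks so each $v_s$ picks up exactly one coefficient, invoke linear independence, and use nonemptiness of the blocks to recover that every $c_i=0$. Your explicit remark about nonempty blocks is a point the paper's proof leaves implicit, but the argument is the same.
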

\begin{proof}
 Suppose $\sum_j \alpha_j V_{P_j} = 0$. Take any $i \in [N]$. It only appears in one of the parts, without loss of generality $P_x$. Since no other $V_{P_{x'}}$ with $x' \neq x$ contains a nontrivial multiple of $v_i$, we must have $\alpha_x = 0$. Repeating the argument for all elements of $[N]$ shows that no nontrivial linear combination of the $V_{P_j}$ can equal $0$, so we are done. 
\end{proof}

We say that a column is \emph{matched} if it is either in both $A$ and $B$ or neither. We will see some analogies between columns being matched and rows being fixed.
\begin{prop}
\label{prop:duality}
Let $\cosp$ be a cospectral pair. Then:
\begin{enumerate}
    \item Row $(n_1, \ldots, n_r)$ is fixed.  Column $(0, \ldots, 0)$ is matched.
    \item At least one column is not matched. At least one row is not fixed. 
\end{enumerate}
\end{prop}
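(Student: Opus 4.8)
The plan is to read the two assertions in each of (1) and (2) as dual statements governed by the last row $(n_1,\dots,n_r)$ and the first column $(0,\dots,0)$ of $M(N)$. I would first record that, because $\rho$ is simplified, a row $r$ is fixed if and only if $v_A[r]=v_B[r]$: the forward direction holds since a fixed row sends every copy of $r$ to itself, forcing the two values to agree, and the reverse is the definition of simplified. I then note two structural facts. The last row is exactly $P(N)$, with entry $\prod_i\phi(p_i^{a_i})=\phi(a)>0$ in column $a$; moreover Proposition~\ref{prop:lambda-values} gives $|M(p^n)[\gamma,\beta]|\le\phi(p^\beta)$ with equality in the bottom row, so the tensor description yields $|M(N)[r,a]|\le\phi(a)$ for every row $r$. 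Dually, the first column is the all-ones vector, and (as computed in the Remark after Proposition~\ref{prop:ISAP=NAR}) it is the unique column of $\overline{M(N)}$ whose entries do not sum to $0$, summing instead to $N$.

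For part (1), I would use that $\overline{v_A}$ and $\overline{v_B}$ are rearrangements of one another, hence share both their maximum entry and their total sum. The bound $|M(N)[r,a]|\le\phi(a)$ gives $v_A[r]=\sum_{a\in A}M(N)[r,a]\le\sum_{a\in A}\phi(a)=v_A[(n_1,\dots,n_r)]$ for every $r$, so the maximum entry of $\overline{v_A}$ is achieved in the last row and equals $v_A[(n_1,\dots,n_r)]$, and similarly for $B$. Equality of the maxima forces $v_A[(n_1,\dots,n_r)]=v_B[(n_1,\dots,n_r)]$, so the last row is fixed. For the matched column, the total sum of $\overline{v_A}$ equals $N$ if $(0,\dots,0)\in A$ and $0$ otherwise; since these totals agree for $A$ and $B$, the column $(0,\dots,0)$ belongs to both or to neither, i.e.\ it is matched.

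For part (2), the unmatched column is immediate: $A\ne B$ forces some column in the symmetric difference, and such a column lies in exactly one of $A,B$, hence is not matched. For an unfixed row I would argue by contraposition. If every row were fixed then $v_A[r]=v_B[r]$ for all $r$, that is $v_A=v_B$, giving $\sum_{a\in A}v_a=\sum_{b\in B}v_b$ among the columns of $M(N)$. The crux is that this can only happen when $A=B$, which reduces to the linear independence of the columns $\{v_a\}_{a\in I(N)}$ --- equivalently, the invertibility of the square matrix $M(N)$.

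The invertibility of $M(N)$ is the one step I expect to require real work. Since $M(N)=M(p_1^{n_1})\otimes\cdots\otimes M(p_r^{n_r})$ and a tensor product of invertible matrices is invertible, it suffices to handle each factor $M(p^n)$. Row $\gamma$ of $M(p^n)$ is $[1,\phi(p),\dots,\phi(p^\gamma),-p^\gamma,0,\dots,0]$, and since $\phi(p^\gamma)+p^{\gamma-1}=p^\gamma$, subtracting each row from the next (a determinant-preserving operation) collapses the difference of consecutive rows to $[0,\dots,0,p^\gamma,-p^\gamma,0,\dots]$. This converts $M(p^n)$ into an upper-bidiagonal matrix with diagonal $(1,p,p^2,\dots,p^n)$ and determinant $p^{n(n+1)/2}\ne0$. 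Hence $M(N)$ is invertible, so $\sum_{a\in A}v_a=\sum_{b\in B}v_b$ forces $A=B$, contradicting $A\ne B$; therefore some row is not fixed, completing the proposition.
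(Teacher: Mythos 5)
Your proof is correct, and the interesting divergence from the paper is in the final claim of part (2). For part (1) you and the paper argue essentially the same way: the last row carries the maximal entries of every column (the paper phrases this as the row ``containing the largest eigenvalues,'' which you make precise via the bound $|M(N)[r,a]|\le\phi(a)$ coming from the tensor structure), and cospectrality preserves the maximum; for the matched column the paper simply invokes the definition of a cospectral pair (in Proposition~\ref{prop:ISAP=NAR}, $A,B\subset I(N)\setminus\{(0,\ldots,0)\}$, so the first column lies in neither), while you rederive matching from the sum argument of the Remark following that proposition --- both are valid, yours is just more general than strictly necessary. Where you genuinely differ is in showing that some row is not fixed. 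The paper argues that the columns of $\overline{M(N)}$ are sums, over a partition, of the character vectors $z_{N,i}=[1,\omega^i,\ldots,\omega^{(N-1)i}]$; linear independence of characters plus Lemma~\ref{lem:partition} then gives linear independence of those columns, so $\overline{v_A}=\overline{v_B}$ would force $A=B$. You instead prove outright that the compact matrix $M(N)$ is invertible: subtracting consecutive rows of $M(p^n)$ (using $\phi(p^{\gamma})+p^{\gamma-1}=p^{\gamma}$) produces an upper-bidiagonal matrix with diagonal $(1,p,\ldots,p^n)$ and determinant $p^{n(n+1)/2}$, and multiplicativity of determinants under tensor products finishes the job. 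Your route is more elementary and self-contained --- it needs neither character theory nor Lemma~\ref{lem:partition} --- and it yields a reusable, slightly stronger fact (an explicit nonzero determinant for $M(N)$, hence linear independence of all columns of the compact matrix), at the cost of a computation; the paper's argument is computation-free and leans on the spectral (Ramanujan-sum) structure already set up in Section~\ref{sec:review}.
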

\begin{proof}
Consider the last row $R$ indexed by $(n_1, \ldots, n_r)$. This row has weight $1$ and plays a special role; it contains the largest eigenvalues of the spectra corresponding to the columns. Since this property is stable under addition, we know that the corresponding value must equal in $\overline{v_A}$ and $\overline{v_B}$, so it is fixed. By construction of integral circulant graphs, neither $A$ or $B$ contains the first column, so it is matched. 

For the second part, we already know that $A \neq B$, which implies the column relation is a NAR. It remains to show that not all the rows are fixed, which we prove with a character argument. If all the rows were fixed, we must have $\overline{v_A} = \overline{v_B}$ as vectors. Recall from Section~\ref{sec:prelim} that these vectors are sums of the columns of $\overline{M(N)}$, which are themselves sums over vectors of the form
\[ z_{N,i} \coloneqq [1, \omega^i, \omega^{2i} \ldots, \omega^{(N-1)i}],\]
where $\omega$ is the $N$-th root of unity. These vectors form characters for $Z_N \rightarrow \mathbb{C}$, and so must be linearly independent (see e.g. Artin \cite{artin1998galois}). By Lemma~\ref{lem:partition}, these vectors are also linearly independent, so having $\overline{v_A} = \overline{v_B}$ implies $A = B$, a contradiction.
\end{proof}

As an immediate consequence of the second part of Proposition~\ref{prop:duality},
\begin{cor}
\label{cor:no-NAR-ISAP}
If there is no NAR on $P(N)$, then $N$ satisfies the ISAP.
\end{cor}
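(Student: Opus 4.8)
The plan is to prove the contrapositive: I would assume that $N$ does \emph{not} satisfy the ISAP and manufacture a NAR on $P(N)$. By Proposition~\ref{prop:ISAP=NAR}, the failure of the ISAP immediately hands me a cospectral pair $A \to_\rho B$ on $I(N)$ with $A \neq B$. Invoking Proposition~\ref{prop:simplified}, I may replace $\rho$ by a simplified permutation (without destroying the cospectral pair) so that the induced row relations $X_1, \ldots, X_s$ are each either \emph{fixed} (a trivial relation $w(r) = w(r)$) or a disjoint row NAR on the row weights $w(r) \in P(N)$. This reduces the problem to showing that at least one of the $X_i$ is a genuine NAR.

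The crux is exactly this nontriviality guarantee, and it is supplied by the second part of Proposition~\ref{prop:duality}: a cospectral pair always has at least one row that is not fixed. If instead every $X_i$ were fixed, then every row of $M(N)$ would be fixed by $\rho$, contradicting Proposition~\ref{prop:duality}. Equivalently, since the induced relations partition $P(N)$ into pieces (each value appearing once on the left and once on the right across all $X_i$), the non-fixed row forces at least one piece to be a disjoint relation rather than a horizontal edge. Hence at least one $X_i$ is a disjoint row NAR.

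Finally I would unwind the definitions to recognize this disjoint row NAR as a NAR on $P(N)$. By construction it is an additive relation $\sum_{(r,0) \in Y_1} w(r) = \sum_{(r,1)\in Y_2} w(r)$ between two disjoint, nonempty collections of row weights, and the row weights are precisely the entries of the vector $P(N)$ under the reindexing that matches the multiplicity of row $(n_1 - m_1, \ldots, n_r - m_r)$ with the value $\phi(m_1, \ldots, m_r)$. Transporting $Y_1$ and $Y_2$ across this reindexing produces disjoint, nonempty, distinct subsets $S_1, S_2 \subset I(N)$ realizing the same additive relation on $P(N)$, contradicting the hypothesis that no NAR on $P(N)$ exists and completing the contrapositive.

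I expect no serious obstacle, since the corollary is essentially a repackaging of the reductions already in place. The only point that needs a touch of care is the bookkeeping in the last paragraph: identifying an induced row NAR—whose indices are rows of $M(N)$ and whose summands are multiplicities—with an honest NAR on the abstract vector $P(N)$, which amounts to checking that the relevant subsets remain nonempty and distinct (immediate from the definition of a disjoint NAR in Section~\ref{sec:prelim}).
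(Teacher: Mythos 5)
Your proof is correct and takes essentially the same route as the paper, which derives the corollary as an immediate consequence of the second part of Proposition~\ref{prop:duality} together with the row-NAR machinery of Section~\ref{sec:row-NARs} (Propositions~\ref{prop:ISAP=NAR} and~\ref{prop:simplified}) --- precisely the chain you spell out. The only difference is that you make explicit the bookkeeping (the simplification of $\rho$ and the reindexing identifying row multiplicities with entries of $P(N)$) that the paper leaves implicit.
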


In \cite{monius2023many}, M\"onius and So's primary strategy was to show that $P(N)$ for $N = pq^k$, $2 < p < q$ and $N = pqr$, $2 < p < q < r$ are both super sequences. Thus, we can rephrase their strategy as proving that no NARs exist for these $N$ and then using Corollary~\ref{cor:no-NAR-ISAP}. 

The rest of our paper explores further conditions beyond super sequences for when NARs cannot exist, which we then combine with observations about $M(N)$ to eliminate possible counterexamples. We remark that it is not sufficient to \textbf{only} consider the nonexistence of NARs on $P(N)$. In particular, \cite{monius2023many} also proves that  $N=2q^k$ satisfies the ISAP, even though $P(N) = \{1, 1, (q-1), (q-1), \ldots, (q-1)q^{k-1}, (q-1)q^{k-1}\}$ contains NARs (in particular, it contains repeated elements).

\section{General Results}
\label{sec:general}

Given multisets $S_1, \ldots, S_k$, define $\otimes_{i=1}^k S_i$ to be the multiset of $|S_1|\times \cdots \times |S_k|$ numbers that are $k$-wise products coming from picking one element from each set.

\begin{thm}
  \label{thm:general-NAR}
  Let $N$ have the prime decomposition $p_1^{n_1}\cdots p_r^{n_r}$. Suppose that: 
  \begin{enumerate}
  \item for \textbf{all} $i \in [r]$, there exists no NAR on \[P\left(\frac{N}{p_i^{n_i}}\right) = \otimes_{j \neq i} \{1, (p_j - 1), (p_j - 1)p_j, \ldots, (p_j-1)p_j^{n_j-1}\} \pmod{(p_i-1)};\]
  \item there \textbf{exists} an $i \in [r]$ such that there exists no NAR on \[ 
 \otimes_{j \neq i} \{1, p_j, \ldots, p_j^{n_j-1}\} \pmod{p_i},\]
  \end{enumerate}
  Then there exists no NAR on $P(N)$. As a consequence, $N$ satisfies the ISAP.
\end{thm}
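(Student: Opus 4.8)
The plan is to prove the key assertion directly and obtain the ISAP as a formal consequence: I will assume a NAR on $P(N)$ exists and derive a contradiction, after which the final sentence follows immediately from Corollary~\ref{cor:no-NAR-ISAP}. Write the NAR in one-sided form as $\sum_{m \in I(N)} a_m\, P(N)[m] = 0$ with coefficients $a_m \in \{-1,0,1\}$ not all zero, where $P(N)[m] = \prod_{i=1}^r \phi(p_i^{m_i})$ for $m = (m_1,\ldots,m_r)$. The whole argument rests on two arithmetic observations about the factor contributed by the prime $p_i$. Since $\phi(p_i^{m_i}) = (p_i-1)p_i^{m_i-1}$, modulo $(p_i - 1)$ we have $\phi(p_i^{m_i}) \equiv 0$ when $m_i \geq 1$ and $\phi(p_i^0) = 1$; and modulo $p_i$ the pure power $p_i^{m_i}$ is $0$ when $m_i \geq 1$ and $1$ when $m_i = 0$. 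Condition (1) will handle the ``boundary'' indices (those with some coordinate equal to $0$) and condition (2) will handle the ``interior'' (all coordinates positive).

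First I would use condition (1). Fix any $i$ and reduce the relation modulo $(p_i - 1)$: every term with $m_i \geq 1$ vanishes, leaving $\sum_{m : m_i = 0} a_m \prod_{j \neq i} \phi(p_j^{m_j}) \equiv 0 \pmod{(p_i - 1)}$. The summand values are precisely the entries of $P(N/p_i^{n_i}) = \otimes_{j\neq i}\{1,(p_j-1),\ldots,(p_j-1)p_j^{n_j-1}\}$ read modulo $(p_i-1)$, and the coefficients $(a_m)_{m_i = 0}$ form a $\{-1,0,1\}$-vector. Since a NAR is exactly a nonzero $\{-1,0,1\}$-combination summing to $0$, condition (1) forces every $a_m$ with $m_i = 0$ to be zero. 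Running this over all $i$ shows $a_m = 0$ whenever some coordinate of $m$ is $0$.

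Next I would use condition (2). All surviving terms now have $m_i \geq 1$ for every $i$, so $\prod_i \phi(p_i^{m_i}) = \bigl(\prod_i(p_i-1)\bigr)\prod_i p_i^{m_i-1}$; dividing out the nonzero constant $\prod_i(p_i-1)$ and substituting $m_i \mapsto m_i - 1$ converts the relation into an exact integer NAR $\sum a_m \prod_i p_i^{m_i} = 0$ on $\otimes_i\{1,p_i,\ldots,p_i^{n_i-1}\}$ (with reindexed coefficients). Take the index $i$ guaranteed by condition (2) and group the terms by the exponent $c$ of $p_i$, writing the relation as $\sum_{c} C_c\, p_i^{c} = 0$, where each block $C_c$ is a $\{-1,0,1\}$-combination of the entries of $\otimes_{j \neq i}\{1,p_j,\ldots,p_j^{n_j-1}\}$. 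By condition (2), a nonzero such block satisfies $p_i \nmid C_c$; so if some block is nonzero, letting $c_0$ be the least such exponent and dividing $\sum_{c\geq c_0} C_c p_i^{c} = 0$ by $p_i^{c_0}$ yields $C_{c_0} \equiv 0 \pmod{p_i}$, a contradiction. Hence every block, and thus every $a_m$, vanishes, contradicting nontriviality. This shows no NAR exists on $P(N)$, and Corollary~\ref{cor:no-NAR-ISAP} gives the ISAP.

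The main obstacle is this last step: condition (2) only yields information modulo $p_i$, whereas $\sum_c C_c p_i^c = 0$ is an exact integer identity in which large cancellations between different powers of $p_i$ are a priori possible. The valuation (lowest-power) argument is what bridges the gap, upgrading ``$C_{c_0} \equiv 0 \pmod{p_i}$'' into an outright contradiction. I also expect a little care is needed to confirm that each mod-$(p_i-1)$ and mod-$p_i$ slice lines up exactly with the tensor factor $\otimes_{j \neq i}$ appearing in the hypotheses, and that ``no NAR'' is consistently used in its one-sided form, namely that no nonzero $\{-1,0,1\}$-combination of the relevant values reduces to $0$.
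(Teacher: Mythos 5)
Your proof is correct and follows essentially the same route as the paper's: reduce modulo $(p_i-1)$ to force all coefficients on indices with a zero coordinate to vanish (condition (1)), factor out $\prod_j (p_j-1)$ to pass to the pure-power tensor set, and then use a minimal-power-of-$p_i$ valuation argument with reduction modulo $p_i$ (condition (2)). Your block decomposition $\sum_c C_c\,p_i^c = 0$ is simply a more explicit phrasing of the paper's step of dividing the relation by the minimum power of $p_i$ that appears and then reducing modulo $p_i$.
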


\begin{proof}
  Suppose we have a NAR $R$ on $P(N)$. Suppose at least one of the terms corresponds to $\phi(m_1, \ldots, m_r)$ where $m_i = 0$ for some $i$. Since all terms corresponding to $m_i > 1$ contains a factor of $(p_i-1)$, taking mod  $(p_i-1)$ we obtain a NAR on just the terms with $m_i=0$. This is exactly the set given in the first condition, so no such NAR exists.

  Therefore, $R$ must only use the elements where all $m_i \geq 1$. These are precisely terms in the product $\otimes_{j=1} \{(p_j-1), (p_j-1)p_j, \ldots, (p_j-1)p^{n_j-1}\}$. Since all the terms are divisible by $(p_j-i)$, there is a bijection between NARs on this set and NARs on $\otimes_{j=1} \{1, p_j, \ldots, p^{n_j-1}\}$. So we must have a corresponding $R'$ on the latter set.

  Furthermore, for any $i$, if the minimum power of $p_i$ that appears in any of the elements in $R'$ is $m$, then dividing by $p_i^m$ gives another NAR. This means we can further assume that for every $i$, there must exist some element in $R'$ not divisible by $p_i$. This means that taking $\pmod{p_i}$ creates a NAR on just the elements not divisible by $p_i$, which is $\otimes_{j \neq i} \{1, p_j, \ldots, p^{n_j-1}\} \pmod{p_i}$. In other words, if there exists an $i$ such that there is no NAR on this set, we would obtain a contradiction.
\end{proof}


If we consider the $(n_1, \ldots, n_r)$ as fixed (all $n_i$ roughly having size $m$) and consider random big primes (all $p_i$ roughly having $p$), then the sets that appear in Theorem~\ref{thm:general-NAR} are approximately uniformly random modulo $(p_i-1)$, so each condition is met with probability $\approx 1 - \frac{2^{m^{r-1}}}{p}$. This means as $p \rightarrow \infty$ Theorem~\ref{thm:general-NAR} gives a heuristic proof that our desired property holds for almost all $N$ (of course, if we consider a different distribution then this heuristic does not hold; for starters, the theorem does not even work for any even numbers). We can obtain another such result by noticing that the $p_i$ cannot be too far from one another:

\begin{thm}
\label{thm:far-apart}
Fix prime $p_1$ and natural numbers $n_1, \ldots, n_r$. Then there are only possibly finitely many $N$ of form $N = p_1^{n_1}\cdots p_r^{n_r}$ that do not satisfy the ISAP.
\end{thm}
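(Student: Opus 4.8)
The plan is to show that failing the ISAP forces the largest prime dividing $N$ to be bounded by a constant depending only on $p_1$ and $n_1, \ldots, n_r$; since every prime factor of $N$ is then at most this bound and the exponents are fixed, only finitely many such $N$ can exist. By Proposition~\ref{prop:ISAP=NAR} and Proposition~\ref{prop:simplified}, if $N$ fails the ISAP there is a simplified cospectral pair $\cosp$, and by Proposition~\ref{prop:duality} it induces at least one (disjoint) row NAR $X$ on $P(N)$. So the first reduction is: it suffices to control the row NARs produced by cospectral pairs as the ``free'' primes $p_2, \ldots, p_r$ grow.

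The heart of the argument is a \emph{dominance} (peeling) lemma capturing the slogan that the primes cannot be too far apart. Let $q = p_k$ be the largest prime factor, with exponent $n_k$, and factor $P(N) = \{1, (q-1), (q-1)q, \ldots, (q-1)q^{n_k-1}\} \otimes P(N/q^{n_k})$. Writing an additive relation on $P(N)$ as $\sum_{\ell=0}^{n_k} s_\ell \Sigma_\ell$, where $s_\ell$ is the $\ell$-th element of the $q$-factor and $\Sigma_\ell$ is the induced signed slice-sum on $P(N/q^{n_k})$, I would use two facts: the partial sums telescope, $\sum_{\ell < m} s_\ell = q^{m-1}$, and each $|\Sigma_\ell|$ is at most $\sigma := \sum_{u \in P(N/q^{n_k})} u = \prod_{j \ne k} p_j^{n_j}$. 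If $q - 1 > \sigma$, then examining the top nonzero slice $\ell^*$ shows $|\Sigma_{\ell^*}| \le \sigma/(q-1) < 1$, forcing every $\Sigma_\ell = 0$. Thus when $q$ dominates $\sigma$, any relation on $P(N)$ splits level-by-level into relations on the smaller set $P(N/q^{n_k})$; in particular (as in Corollary~\ref{cor:no-NAR-ISAP} and Proposition~\ref{prop:super-sequence}, the single-prime base case $P(p_1^{n_1})$ being a super sequence for odd $p_1$) the $q$-direction contributes nothing new.

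With this lemma I would run a downward induction, peeling the current largest prime. Either the dominance inequality $q - 1 \le \prod_{j \ne k} p_j^{n_j}$ holds --- bounding $q$ in terms of the remaining primes --- or the relation descends to $P(N/q^{n_k})$, where the induction hypothesis applies. To upgrade these relative bounds into an absolute bound on the largest prime (which is what yields finiteness), I would feed back the simultaneous structure of the cospectral pair that a lone NAR ignores: the induced relations $X_1, \ldots, X_s$ partition $P(N)$ with each value used once per side, and every fixed row contributes a column NAR (Lemma~\ref{lem:fixed-rows-lincomb}), with the last row always fixed and at least one row non-fixed (Proposition~\ref{prop:duality}). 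Coupling the row relations with these column relations is what pins down the admissible prime sizes.

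The main obstacle is exactly the gap between a NAR merely existing and the ISAP actually failing. Row NARs can persist for infinitely many values of $q$: if $P(N/q^{n_k})$ already carries a NAR (for instance the repeated-value NARs forced when $2 \mid N$, as in the $N = 2q^k$ family that nonetheless satisfies the ISAP), then embedding it at level $\ell = 0$ produces a NAR on $P(N)$ for every large $q$. Hence a proof that only tracks $P(N)$-NARs cannot bound $q$, and the clustered regime --- where several primes are simultaneously large and comparable so that dominance is silent --- must be handled separately; there I would invoke the modular and counting analysis behind Theorem~\ref{thm:general-NAR}. The real work is therefore to show that a \emph{full} cospectral pair, as opposed to a single additive relation, cannot survive once the largest prime exceeds the constant threshold, and this is where I expect the argument to be most delicate.
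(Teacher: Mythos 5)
Your proposal is not a complete proof: everything after the peeling lemma is a plan, and the step you defer is exactly where the theorem lives. Concretely, your dominance lemma is correct (the telescoping bound $\sum_{\ell<m}s_\ell = q^{m-1}$ together with $|\Sigma_\ell|\le\sigma$ does force every slice sum to vanish when $q-1>\sigma$), but it only yields a dichotomy: either $q-1 \le \sigma = \prod_{j\ne k}p_j^{n_j}$, or every NAR on $P(N)$ descends to a NAR on $P(N/q^{n_k})$. The first alternative bounds the largest prime only \emph{relative to the other free primes}, and the second terminates at a NAR on some clustered sub-product, which --- as you yourself observe --- persists for every value of the remaining large primes. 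So no iteration of this dichotomy produces the absolute bound on all of $p_2,\ldots,p_r$ that finiteness requires. Your appeal to ``the modular and counting analysis behind Theorem~\ref{thm:general-NAR}'' cannot close the gap either: the hypotheses there are congruence conditions on the $p_i$, not size conditions, so they do not exclude infinitely many clustered tuples. The ``real work'' you name --- showing that a full cospectral pair, not merely a NAR, cannot survive once the top prime is large --- is precisely what is missing, and you do not carry it out.

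You should know, however, that your diagnosis of the obstruction is sharper than the paper's own proof, which takes exactly the route you reject. The paper argues: if $p_2 > p_1^{n_1}+1$, the $P$-terms involving $p_1,p_2$ form a super sequence; if $p_3$ exceeds the sum $C$ of those terms, the terms involving $p_1,p_2,p_3$ form a super sequence; and so on --- hence if every prime is sufficiently large compared to its predecessors, $P(N)$ is a super sequence, there is no NAR, and $N$ satisfies the ISAP by Proposition~\ref{prop:super-sequence} and Corollary~\ref{cor:no-NAR-ISAP}. Finiteness is then asserted. For $r=2$ and odd $p_1$ this is a complete argument (a failing $N$ forces $p_2 \le p_1^{n_1}+1$, leaving finitely many candidates). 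But for $r \ge 3$ the negation of ``every prime exceeds its threshold'' is only ``some prime is below its threshold,'' which describes an infinite family of tuples --- e.g.\ $p_2$ small and clustered with $p_1$, $p_3$ arbitrary --- and for $p_1=2$ the super-sequence claim fails outright since $p_1-1=1$ makes $P(N)$ contain repeated values (the paper itself notes this phenomenon for $N=2q^k$ at the end of Section~\ref{sec:column-NAR}). These are exactly the configurations your counterexample paragraph isolates. So your peeling lemma recovers, in a cleaner per-prime form, everything the paper's method actually establishes, and the gap in your proposal is equally a gap in the paper's proof: neither text supplies the argument that converts ``no NAR when the primes are spread out'' into ``only finitely many $N$ fail the ISAP.''
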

\begin{proof}
    Let $p_2 > p_1^{n_1} + 1$. We can check that the only terms in $P(N)$ involving $p_1$ and $p_2$ form a super sequence when put in lexographic order sorted by the leading power of $p_2$ and then $p_1$:
\[
1, (p_1-1), p_1(p_1-1), \ldots, p_1^{n_1-1}(p_1-1), (p_2-1), (p_2-1)(p_1-1), (p_2-1)p_1(p_1-1), \dots,
\]
because 
\[
1 + (p_1 - 1) + \cdots + p_1^{n_1-1}(p_1-1) = p_1^{n_1} < p_2 - 1
\]
and addition is otherwise dominated by the power of $p_2$.

Let the sum of all such terms be $C$, and let $p_3 > C + 1$. The same logic shows that all the terms involving only $p_1, p_2, p_3$ can be arranged into a super sequence. Repeating the argument, we can conclude that as long as every $p_i$ is sufficiently big compared to the previous $p_i$, we can put all the elements of $P(N)$ into a super sequence. Then Proposition~\ref{prop:super-sequence} shows that $N$ satisfies the ISAP.
\end{proof}


\begin{lem}
  \label{lem:adjacent-rows}
  In $M(N)$, for each row index $(m_1, \ldots, m_i, \ldots, m_r)$ where $m_i < n_i$, the two rows with indices  $(m_1, \ldots, m_i, \ldots, m_r)$ and $(m_1, \ldots, m_i+1, \ldots, m_r)$ (which differ only in the $i$-th entry), have matching values on all coordinates except the columns $(*, m_i+1, *)$, where
    \begin{align*}
    &\hphantom{\,\,\,\,\,\,\,\,} M(N)[(m_1, \ldots, m_i+1, \ldots, m_r), (m_1', \ldots, m_i+1, \ldots, m_r')] \\ 
    &\hphantom{\,\,\,\,\,\,\,\,} - M(N)[(m_1, \ldots, m_i, \ldots, m_r), (m_1', \ldots, m_i+1, \ldots, m_r')] \\
    &= p_i^{m_i+1} \prod_{j \neq i} M(p_j^{n_j})[m_j, m_j']
    \end{align*}
and the columns $(*, m_i+2, *)$, where
    \begin{align*}
      &\hphantom{\,\,\,\,\,\,\,\,} M(N)[(m_1, \ldots, m_i+1, \ldots, m_r), (m_1', \ldots, m_i+2, \ldots, m_r')] \\
      &\hphantom{\,\,\,\,\,\,\,\,} - M(N)[(m_1, \ldots, m_i, \ldots, m_r), (m_1', \ldots, m_i+2, \ldots, m_r')] \\
      &= -p_i^{m_i+1} \prod_{j \neq i} M(p_j^{n_j})[m_j, m_j'];
    \end{align*}
\end{lem}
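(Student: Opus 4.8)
The plan is to exploit the multiplicative (tensor) structure of $M(N)$ to collapse the claim to a one-prime computation. By construction, for a column $c = (m_1', \ldots, m_r')$ we have $M(N)[(m_1,\ldots,m_r),c] = \prod_{j} M(p_j^{n_j})[m_j, m_j']$. Writing $r$ and $r^+$ for the two row indices $(m_1,\ldots,m_i,\ldots,m_r)$ and $(m_1,\ldots,m_i+1,\ldots,m_r)$, these differ only in the $i$-th tensor factor, so the common factor $\prod_{j\neq i} M(p_j^{n_j})[m_j, m_j']$ pulls out and
\[
M(N)[r^+,c]-M(N)[r,c] = \Big(M(p_i^{n_i})[m_i+1, m_i'] - M(p_i^{n_i})[m_i, m_i']\Big)\prod_{j\neq i} M(p_j^{n_j})[m_j, m_j'].
\]
Thus everything reduces to understanding the difference of two vertically adjacent rows, $\gamma=m_i$ and $\gamma=m_i+1$, of the single-prime matrix $M(p_i^{n_i})$, and checking which columns $\beta=m_i'$ survive.

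First I would record the explicit piecewise form of a single row of $M(p^n)$ coming from Proposition~\ref{prop:lambda-values}: in row $\gamma$ the entry in column $\beta$ equals $1$ if $\beta=0$, equals $(p-1)p^{\beta-1}$ for $1\le\beta\le\gamma$, equals $-p^{\gamma}$ for $\beta=\gamma+1$, and equals $0$ for $\beta\ge\gamma+2$. With $\gamma=m_i$ I then subtract row $m_i$ from row $m_i+1$ by cases on $\beta$. For $\beta\le m_i$ the two entries are literally identical (both $1$, or both $(p-1)p^{\beta-1}$), so the difference is $0$. At $\beta=m_i+1$ the lower row $m_i+1$ still lies in its ``$(p-1)p^{\beta-1}$'' regime while the upper row $m_i$ has hit its $-p^{m_i}$ entry, giving $(p-1)p^{m_i}-(-p^{m_i})=p^{m_i+1}$. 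At $\beta=m_i+2$ the lower row now hits its own $-p^{m_i+1}$ entry while the upper row has already dropped to $0$, giving $-p^{m_i+1}-0=-p^{m_i+1}$. For $\beta\ge m_i+3$ both entries are $0$, so the difference vanishes again.

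Multiplying these single-prime differences back by the common factor $\prod_{j\neq i}M(p_j^{n_j})[m_j,m_j']$ yields exactly the two displayed identities (positive on the column-slab $m_i'=m_i+1$, negative on $m_i'=m_i+2$) and shows the difference vanishes on every other column, which is the assertion. I do not expect a genuine obstacle here; the only real care is bookkeeping at the piecewise boundaries of Proposition~\ref{prop:lambda-values}, in particular remembering that the entry $-p^{\gamma}$ sits in column $\beta=\gamma+1$ (not $\beta=\gamma$), so that the ``$+$'' difference lands on slab $m_i+1$ and the ``$-$'' difference on slab $m_i+2$. Everything else is a routine case split, and the tensor factorization guarantees the $j\neq i$ coordinates ride along untouched.
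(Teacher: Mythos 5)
Your proof is correct and is essentially the paper's own argument: the paper dismisses this lemma as ``Direct observation from $M(N)$,'' and your write-up simply makes that observation explicit via the tensor factorization $M(N)[(m_1,\ldots,m_r),(m_1',\ldots,m_r')]=\prod_j M(p_j^{n_j})[m_j,m_j']$ and the piecewise description of $M(p^n)$ from Proposition~\ref{prop:lambda-values}. Your case analysis at the slabs $m_i'=m_i+1$ and $m_i'=m_i+2$ (yielding $+p_i^{m_i+1}$ and $-p_i^{m_i+1}$ times the common factor) is exactly right, so this is a faithful, fully detailed version of the intended proof.
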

\begin{proof}
    Direct observation from $M(N)$.
\end{proof}

\begin{prop}
\label{prop:fix-all-coordinates} Let $N$ have the prime decomposition $p_1^{n_1}\cdots p_r^{n_r}$. Suppose that there exists a cospectral pair $\cosp$ and some $i \in [r]$ such that all $(n_i+1)$ rows indexed $(n_1, \ldots, m_i, \ldots, n_r)$ are fixed. Then there exists a NAR on \[\otimes_{j \neq i} \{1, (p_j-1), (p_j-1)p_j, \cdots, (p_j-1)p_j^{n_j-1}\}.\]
\end{prop}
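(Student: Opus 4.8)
The plan is to exploit the multiplicative (tensor) structure of $M(N)$ together with the column NARs guaranteed by the fixed rows. Write $R_{m_i}$ for the row indexed $(n_1, \ldots, n_{i-1}, m_i, n_{i+1}, \ldots, n_r)$, so the hypothesis says $R_0, R_1, \ldots, R_{n_i}$ are all fixed. Since the $j$-th coordinate of $R_{m_i}$ equals $n_j$ for every $j \neq i$, the tensor formula for $M(N)$ gives the factorization
\[
R_{m_i}[(c_1, \ldots, c_r)] = M(p_i^{n_i})[m_i, c_i] \cdot \prod_{j \neq i} M(p_j^{n_j})[n_j, c_j] = M(p_i^{n_i})[m_i, c_i] \cdot \prod_{j \neq i} \phi(p_j^{c_j}),
\]
using that the last row of $M(p_j^{n_j})$ reads $1, (p_j-1), (p_j-1)p_j, \ldots, (p_j-1)p_j^{n_j-1}$, i.e. $M(p_j^{n_j})[n_j, c_j] = \phi(p_j^{c_j})$. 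Thus any linear combination $\delta \coloneqq \sum_{m_i} \alpha_{m_i} R_{m_i}$ has the separated form $\delta[(c_1, \ldots, c_r)] = f(c_i)\prod_{j \neq i}\phi(p_j^{c_j})$, where $f(c_i) \coloneqq \sum_{m_i}\alpha_{m_i}M(p_i^{n_i})[m_i, c_i]$ is the corresponding combination of the rows of $M(p_i^{n_i})$.

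The key step is to choose the coefficients so that $f$ isolates a single column of $M(p_i^{n_i})$. I claim $M(p_i^{n_i})$ is invertible: its columns are, up to collapsing repeated rows, the columns of $\overline{M(p_i^{n_i})}$, and those are linearly independent by the same character argument used in Proposition~\ref{prop:duality} (collapsing equal rows leaves the column nullspace unchanged, so the square matrix $M(p_i^{n_i})$ has independent columns and hence independent rows). Therefore for any target index $k^*$ there exist $\alpha_{m_i}$ with $f = e_{k^*}$, the indicator of column $k^*$. Because all the $R_{m_i}$ are fixed, Lemma~\ref{lem:fixed-rows-lincomb} (extended to the linear combination $\delta$) yields the column NAR $\sum_{c \in A}\delta_c = \sum_{c \in B}\delta_c$, which for this choice of $f$ collapses to
\[
\sum_{a \in A_{k^*}} \prod_{j \neq i}\phi(p_j^{a_j}) = \sum_{b \in B_{k^*}} \prod_{j\neq i}\phi(p_j^{b_j}),
\]
where $A_{k^*} \coloneqq \{(c_j)_{j \neq i} : (c_1, \ldots, k^*, \ldots, c_r) \in A\}$ and $B_{k^*}$ is defined analogously. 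As $(a_j)_{j\neq i}$ ranges over $\prod_{j\neq i}\{0, \ldots, n_j\}$, the value $\prod_{j\neq i}\phi(p_j^{a_j})$ ranges over exactly the multiset $\otimes_{j \neq i}\{1, (p_j-1), (p_j-1)p_j, \ldots, (p_j-1)p_j^{n_j-1}\}$, so this is an additive relation on the desired set.

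It remains to secure nontriviality, i.e. to choose $k^*$ with $A_{k^*} \neq B_{k^*}$. This is automatic: if $A_{k} = B_k$ held for every $k \in \{0, \ldots, n_i\}$, then reassembling the slices over $k$ would give $A = B$, contradicting that $\cosp$ is a cospectral pair. Fixing such a $k^*$ turns the displayed relation into a genuine NAR on $\otimes_{j\neq i}\{1, (p_j-1), \ldots, (p_j-1)p_j^{n_j-1}\}$, completing the argument. I expect the main obstacle to be the invertibility claim for $M(p_i^{n_i})$ — everything else is bookkeeping on the tensor indices — though this should follow cleanly from the character/linear-independence machinery already developed for Proposition~\ref{prop:duality}; one could alternatively verify it directly, since $M(p^n)$ is lower-Hessenberg with nonzero superdiagonal entries $-p^\gamma$.
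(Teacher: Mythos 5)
Your argument is correct, but it reaches the conclusion by a genuinely different route than the paper. The paper proceeds by contradiction and never inverts anything: assuming no NAR on $\otimes_{j \neq i} \{1, (p_j-1), \ldots, (p_j-1)p_j^{n_j-1}\}$, it takes differences of \emph{adjacent} pairs of the fixed rows, uses Lemma~\ref{lem:adjacent-rows} to see that each such difference is supported on a single column slice $c_i = m_i+1$ (scaled by $p_i^{m_i+1}$), and then inducts downward from $c_i = n_i$ to show every column of $M(N)$ is matched, contradicting part (2) of Proposition~\ref{prop:duality}. You instead argue directly: the tensor factorization of the rows $(n_1, \ldots, m_i, \ldots, n_r)$ plus the invertibility of $M(p_i^{n_i})$ lets you take a linear combination of \emph{all} $n_i+1$ fixed rows isolating the slice $c_i = k^*$, and choosing $k^*$ with $A_{k^*} \neq B_{k^*}$ (possible since $A \neq B$) makes the resulting column relation exactly the desired NAR. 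The trade-off: the paper needs only the two-row form of Lemma~\ref{lem:fixed-rows-lincomb} and no linear algebra beyond Lemma~\ref{lem:adjacent-rows}, but its argument is indirect and global; yours is shorter, constructive, and proves slightly more (a NAR localized to \emph{every} slice on which $A$ and $B$ differ, with values equal to the target multiset rather than a $p_i$-power multiple of it), at the cost of extending Lemma~\ref{lem:fixed-rows-lincomb} to $(n_i+1)$-term combinations (routine, as the paper itself concedes) and establishing invertibility of $M(p_i^{n_i})$. Your character-theoretic proof of invertibility is sound, since every row of the compact form has multiplicity $\phi(p_i^{n_i-\gamma}) \geq 1$, so the compact and extended forms have the same column dependences, and the extended columns are independent by Lemma~\ref{lem:partition}. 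One quibble: your parenthetical fallback --- ``lower-Hessenberg with nonzero superdiagonal'' --- does not by itself imply invertibility (the all-ones $2 \times 2$ matrix is lower-Hessenberg with nonzero superdiagonal and is singular). If you want an elementary verification instead, subtract consecutive rows of $M(p^n)$: the resulting matrix is upper bidiagonal with diagonal $1, p, p^2, \ldots, p^n$, hence nonsingular.
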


\begin{proof} 
We prove by contradiction. Suppose no such NAR exists. Consider two of these fixed rows $(n_1, \ldots, n_i, \ldots, n_r)$ and $(n_1, \ldots, n_i-1, \ldots, n_r)$. 
By Lemma~\ref{lem:fixed-rows-lincomb}, $\cosp$ induces a column NAR $X$ on the difference between these rows. By Lemma~\ref{lem:adjacent-rows}, they differ only in columns $(*, n_i, *)$. For each such column $(m_1, \ldots, n_i, \ldots, m_r),$ we have
\begin{align*}
&\hphantom{\,\,\,\,\,\,\,\,} M(N)[(n_1, \ldots, n_i, \ldots, n_r), (m_1, \ldots, n_i, \ldots, m_r)] \\
&\hphantom{\,\,\,\,\,\,\,\,} - M(N)[(n_1, \ldots, n_i-1, \ldots, n_r), (m_1, \ldots, n_i, \ldots, m_r)] \\
&= p_i^{n_i} \prod_{j \neq i} M(p_j^{n_j})[n_j, m_j] \\
&= p_i^{n_i} \prod_{j \neq i} (p_j-1)p_j^{m_j-1}. 
\end{align*}
Equivalently, as we range over all the coordinates except for $i$, the nonzero differences between our two rows form the multiset 
\[
  p_i^{n_i} \cdot \otimes_{j \neq i} \{1, (p_j-1), (p_j-1)p_j, \ldots, (p_j-1)p_j^{n_j-1}\}.
\]
By our assumption, no NAR exists on this set. Thus, none of their entries can be involved nontrivially in $X$, so we can conclude that all columns of the form $(*, n_i, *)$ are matched.

Iterating, consider $(n_1, \ldots, n_i-2, \ldots, n_r)$ and $(n_1, \ldots, n_i-1, \ldots, n_r)$, which are again both fixed, so $\cosp$ induces a column NAR $X$ on their difference. Their coordinates only differ in columns $(*, n_i-1, *)$ and $(*, n_i, *)$. As we just calculated, the multiset of differences on the columns in $(*, n_i - 1, *)$ equals
\[
  p_i^{n_i-1} \cdot \otimes_{j \neq i} \{1, (p_j-1), (p_j-1)p_j, \ldots, (p_j-1)p_j^{n_j-1}\},
\]
which we assumed to be impossible. Since we have already shown that the columns $(*, n_i, *)$ are matched, none of the columns $(*, n_i-1, *)$ can be involved nontrivially in $X$ either, and thus they are also matched. Continuing this logic, all columns in $I(N)$ must be matched, which is a contradiction.
\end{proof}

\begin{thm}
    \label{thm:general-cos}
Let $N$ have the prime decomposition $p_1^{n_1}\cdots p_r^{n_r}$. Suppose that there exists an $i \in [r]$ such that: 
  \begin{enumerate}
  \item there exists no NAR on \[ \{1, (p_i - 1), (p_i - 1)p_i, \ldots, (p_i-1)p_i^{n_i-1}\} \pmod{\gcd(\{p_j-1\}_{j \neq i})};\]
  \item there exists no NAR on \[ 
 \otimes_{j \neq i} \{1, (p_j-1), (p_j-1)p_j,  \ldots, (p_j-1)p_j^{n_j-1}\},\]
  \end{enumerate}
  Then $N$ satisfies the ISAP.
\end{thm}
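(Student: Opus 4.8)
The plan is to argue by contradiction, exploiting the ``row versus column'' duality developed in Section~\ref{sec:column-NAR}: I will use Condition~2 to locate a non-fixed row and then use Condition~1 to forbid the row NAR it must generate. Suppose $N$ does not satisfy the ISAP. By Proposition~\ref{prop:ISAP=NAR} there is a cospectral pair $\cosp$, which by Proposition~\ref{prop:simplified} I may take with $\rho$ simplified. Fix the index $i$ supplied by the hypotheses and consider the \emph{corner fiber}, the $(n_i+1)$ rows indexed by $(n_1, \ldots, m_i, \ldots, n_r)$ as $m_i$ ranges over $\{0, \ldots, n_i\}$ with the other coordinates pinned to $n_j$. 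The multiplicity of the row $(\gamma_1, \ldots, \gamma_r)$ is $\prod_k \phi(p_k^{n_k - \gamma_k})$, so on this fiber (where $\gamma_j = n_j$ for $j \neq i$) the multiplicities collapse to $\phi(p_i^{n_i - m_i})$ and hence range over exactly the set $\{1, (p_i-1), (p_i-1)p_i, \ldots, (p_i-1)p_i^{n_i-1}\}$ appearing in Condition~1.

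The first step forces a non-fixed row in this fiber. Condition~2 is precisely the conclusion that Proposition~\ref{prop:fix-all-coordinates} would derive were all $(n_i+1)$ corner-fiber rows fixed; so, since a cospectral pair exists, the contrapositive of that proposition together with Condition~2 shows that at least one corner-fiber row $r^\ast$ is \emph{not} fixed. (By the first part of Proposition~\ref{prop:duality} the corner $(n_1, \ldots, n_r)$ itself is fixed, so $r^\ast$ has $m_i < n_i$, consistent with its multiplicity being one of the $(p_i-1)p_i^{k}$.) Because $\rho$ is simplified, $r^\ast$ non-fixed means $v_A[r^\ast] \neq v_B[r^\ast]$, so the connected component of $G_\rho$ containing $(r^\ast,0)$ is not a single fixed edge; it is therefore a disjoint row NAR, call it $X$, with $r^\ast$ on its left.

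The second step extracts a forbidden relation by reducing $X$ modulo $g \coloneqq \gcd(\{p_j-1\}_{j \neq i})$. The key observation is that any row \emph{outside} the corner fiber has some coordinate $\gamma_j < n_j$ with $j \neq i$, so its multiplicity carries a factor $\phi(p_j^{n_j-\gamma_j})$ divisible by $(p_j-1)$ and hence by $g$; thus every non-corner-fiber term of $X$ vanishes modulo $g$. Reducing the integer equality $X$ modulo $g$ therefore leaves $\sum_{r \in L} w(r) \equiv \sum_{r \in R} w(r) \pmod{g}$, where $L$ and $R$ are the corner-fiber rows on the left and right of $X$. Since $X$ is a disjoint NAR, $L$ and $R$ are disjoint, and since $r^\ast \in L$ the set $L$ is nonempty, so $L \neq R$. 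This is exactly a NAR on $\{1, (p_i-1), \ldots, (p_i-1)p_i^{n_i-1}\} \pmod{g}$, contradicting Condition~1.

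I expect the main obstacle to be bookkeeping rather than any single hard idea: the delicate point is verifying that the relation surviving the reduction is genuinely \emph{nontrivial}, i.e.\ that its two index sets differ. This is exactly where the disjointness of the row NAR $X$ and the nonemptiness guaranteed by $r^\ast \in L$ are indispensable; without the disjointness one could only conclude $L$ and $R$ are unequal as multisets of values, not as index sets, and the footnoted convention on NARs would be in jeopardy. A secondary care point is confirming that ``reduction modulo $g$'' matches the paper's convention (as used in the proof of Theorem~\ref{thm:general-NAR}) that a NAR modulo $m$ is an additive relation holding as a congruence with distinct index sets, and that the multiplicities on the corner fiber are precisely the Condition~1 multiset.
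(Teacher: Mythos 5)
Your proof is correct and is essentially the paper's own argument run in contrapositive order: the paper uses Condition~1 to force all corner-fiber rows to be fixed and then invokes Proposition~\ref{prop:fix-all-coordinates} against Condition~2, whereas you use Condition~2 (via the contrapositive of that proposition) to produce a non-fixed corner-fiber row and then reduce its induced row NAR modulo $\gcd(\{p_j-1\}_{j \neq i})$ against Condition~1. The ingredients — the corner fiber with multiplicities $\{1, (p_i-1), \ldots, (p_i-1)p_i^{n_i-1}\}$, the divisibility of every other row multiplicity by the gcd, and Proposition~\ref{prop:fix-all-coordinates} — are identical, so this is the same proof, merely reorganized (and with the step the paper glosses over, that a non-fixed row yields a disjoint row NAR surviving the modular reduction, spelled out more carefully).
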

\begin{proof}
We prove by contradiction and assume a cospectral pair $\cosp$ exists. Let $i$ be the number given by the Theorem assumption. The rows $(n_1, \ldots, m_i, \ldots, n_r)$ where $m_i$ ranges from $0$ to $n_i$ (while all the other coordinates are equal to their maxima $n_j$) have row multiplicities in
\[
J = \{1, (p_i-1), \ldots, (p_i-1)p_i^{n_i-1}\}.
\]The other elements in $P(N)$ are all divisible by at least one $(p_j-1)$ for some $j \neq i$, so they are all divisible by $\gcd(\{p_j-1\}_{j \neq i})$. Taking this modulus, the first condition enforces that none of the elements in $J$ can be nontrivially involved in a NAR. As a result, their rows $(n_1, \ldots, m_i, \ldots, n_r)$ must all be fixed. Proposition~\ref{prop:fix-all-coordinates} then applies, meaning we must have a NAR on \[ 
 \otimes_{j \neq i} \{1, p_j, \ldots, p_j^{n_j-1}\} \pmod{p_i}.\]
As we do not, we have reached a contradiction.
\end{proof}

Unlike Theorem~\ref{thm:general-NAR}, Theorem~\ref{thm:general-cos} does \textbf{not} prove that there exists no NAR on $P(N)$. For example, consider $N = 5^3 19^2$. There is a NAR on $P(N)$ because $(19-1)*19 + (19-1) = (19-1)*(5-1)(5)$. However, as there does not exist a NAR on $\{1, (5-1), (5-1)5, (5-1)5^2\} \pmod{19-1}$ and there does not exist a NAR on $\{1,(19-1), (19-1)*19\}$, Theorem~\ref{thm:general-cos} still applies. This means we really are using additional structural properties of $M(N)$ in addition to number theoretical properties of $N$.

Recall that \cite{monius2023many} already proved the ISAP for $p^k$, $pq^k$, and $pqr$, so the natural next step is $N = p^{n_1}q^{n_2}$. One possible way to use  Theorem~\ref{thm:general-cos} is:
\begin{cor}
  \label{cor:pkqk} Suppose $N = p^{n_1}q^{n_2}$. If there exists no NAR on $\{1, (p - 1), (p - 1)p, \ldots, (p-1)p^{n_1-1}\} \pmod{(q-1)}$, $N$ satisfies the IAP.
\end{cor}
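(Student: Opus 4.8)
The plan is to derive this directly from Theorem~\ref{thm:general-cos} by specializing to the case $r = 2$ with $p_1 = p$, $p_2 = q$, exponents $n_1, n_2$, and choosing $i = 1$ (the index of the prime $p$). The first thing to check is that condition~(1) of Theorem~\ref{thm:general-cos} reduces exactly to the hypothesis of the corollary. Since there is precisely one prime other than $p_1 = p$, namely $p_2 = q$, the modulus $\gcd(\{p_j - 1\}_{j \neq 1})$ collapses to $\gcd(\{q - 1\}) = q - 1$, so condition~(1) reads: no NAR on $\{1, (p-1), (p-1)p, \ldots, (p-1)p^{n_1-1}\} \pmod{q-1}$, which is exactly what we assume.

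The remaining work is to show that condition~(2) of Theorem~\ref{thm:general-cos} holds automatically. With $i = 1$ and $r = 2$, the product $\otimes_{j \neq 1}\{1, (p_j-1), \ldots, (p_j-1)p_j^{n_j-1}\}$ is the single set $J = \{1, (q-1), (q-1)q, \ldots, (q-1)q^{n_2-1}\}$. I would verify that, sorted in increasing order, $J$ is a super sequence in the sense of Section~\ref{sec:prelim}: writing $a_0 = 1$ and $a_k = (q-1)q^{k-1}$ for $k \geq 1$, a geometric-series computation gives $\sum_{i=0}^{k} a_i = 1 + (q^k - 1) = q^k$, and the next term $a_{k+1} = (q-1)q^k$ exceeds $q^k$ precisely when $q - 1 > 1$. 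Hence for $q \geq 3$ the sequence is strictly super-increasing, so Proposition~\ref{prop:super-sequence} guarantees there is no NAR on $J$, establishing condition~(2).

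The only delicate point, and the place to be careful, is the requirement $q \geq 3$ hidden in the super-sequence step. I would resolve this by observing that the corollary's hypothesis already forces $q \geq 3$: if instead $q = 2$, then the modulus $q - 1 = 1$ reduces every element of $\{1, (p-1), \ldots, (p-1)p^{n_1-1}\}$ to $0$, so that set (having all entries equal) certainly admits a NAR, contradicting the assumption. Thus whenever the hypothesis is nonvacuous we have $q \geq 3$, condition~(2) holds as above, and both conditions of Theorem~\ref{thm:general-cos} are met; the theorem then yields that $N$ satisfies the ISAP. I do not expect any serious obstacle here — the corollary is essentially a bookkeeping specialization of Theorem~\ref{thm:general-cos}, the only genuine content being the elementary verification that $J$ is a super sequence, together with the remark that the degenerate modulus $q - 1 = 1$ cannot occur under the hypothesis.
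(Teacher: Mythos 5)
Your proposal is correct and takes essentially the same route as the paper: the corollary's hypothesis is exactly condition (1) of Theorem~\ref{thm:general-cos} with $i$ the index of $p$, and condition (2) holds automatically because the product $\otimes_{j\neq i}$ collapses to the single set $\{1,(q-1),(q-1)q,\ldots,(q-1)q^{n_2-1}\}$, which is a super sequence. Your extra check that $q=2$ cannot occur under the hypothesis (since mod $q-1=1$ the set trivially admits a NAR) is a careful touch that the paper's one-line proof glosses over, but it does not constitute a different approach.
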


\begin{proof}
It suffices to check the second condition of Theorem~\ref{thm:general-cos}. To see this, note that the multiset $\otimes_{j \neq i} \{1, (p_j-1), (p_j-1)p_j, \cdots, (p_j-1)p_j^{n_j-1}\}$ reduces to a single list, which cannot have a NAR because it is a super sequence.
\end{proof}

In the remaining sections, we give stronger results when one or both of the $n_i$ equal $2$.

\section{$N = p^2q^{n_2}$}
\label{sec:p2qk}

When one of the coefficients equals $2$, we can give a  stronger statement than Corollary~\ref{cor:pkqk}. The proof is similar to that of Theorem~\ref{thm:general-cos}, except at various parts of it we use some alternative tactics.

\begin{thm}
  \label{thm:p2qk} Suppose $N = p^2 q^{n_2}$, where $p,q$ are odd primes and $(q-1) \nmid (p-1)^2(p+1)$, then $N$ satisfies the ISAP.
\end{thm}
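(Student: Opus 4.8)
The plan is to run the contradiction argument of Theorem~\ref{thm:general-cos} with $i=1$, but to replace its ``no NAR modulo $(q-1)$'' hypothesis by a case analysis tuned to $n_1=2$. Assume a cospectral pair $\cosp$ with $\rho$ simplified. I aim to show that the three rows $(0,n_2),(1,n_2),(2,n_2)$ are all fixed; then Proposition~\ref{prop:fix-all-coordinates}, applied with $i=1$, yields a NAR on $\otimes_{j\neq 1}\{1,(q-1),\ldots,(q-1)q^{n_2-1}\}=\{1,(q-1),(q-1)q,\ldots,(q-1)q^{n_2-1}\}$. Since $q$ is odd this set is a super sequence, because $1+(q-1)+\cdots+(q-1)q^{k-1}=q^{k}<(q-1)q^{k}$, so Proposition~\ref{prop:super-sequence} gives the contradiction. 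Everything reduces to fixing those three rows, whose multiplicities are exactly $\phi(p^2)=(p-1)p$, $\phi(p)=(p-1)$, and $\phi(1)=1$.

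Two of them are easy. Row $(2,n_2)=(n_1,n_2)$ is the last row, hence fixed by Proposition~\ref{prop:duality}. For row $(1,n_2)$ I reduce the induced row NARs modulo $(q-1)$: each multiplicity of a row $(m_1,m_2)$ with $m_2<n_2$ carries a factor $\phi(q^{\,n_2-m_2})$ divisible by $(q-1)$, so only the values $1,(p-1),(p-1)p$ survive, and $1$ (sitting on the already-fixed row $(2,n_2)$) is unavailable. The hypothesis $(q-1)\nmid (p-1)^2(p+1)$ is exactly what kills the dangerous relations among $\{(p-1),(p-1)p\}$: each of $(q-1)\mid (p-1)$, $(q-1)\mid (p-1)(p+1)$, and $(q-1)\mid (p-1)^2$ would force $(q-1)\mid (p-1)^2(p+1)$. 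Hence $p-1\not\equiv 0$ and $(p-1)\pm(p-1)p\not\equiv 0 \pmod{q-1}$; moreover $(q-1)\nmid(p-1)$ makes the value $p-1$ occur only at row $(1,n_2)$, so any disjoint row NAR through this row would reduce modulo $(q-1)$ to a forbidden relation. Thus row $(1,n_2)$ is fixed.

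The genuine difficulty is row $(0,n_2)$, whose value $(p-1)p$ reduces to $0$ modulo $(q-1)$ precisely when $(q-1)\mid p(p-1)$; together with the hypothesis this forces $p\mid (q-1)$ and $v:=(q-1)/p\mid(p-1)$, and the modular test is now blind to this row. My alternative tactic is to divide out. In any induced row NAR $X$ containing $(0,n_2)$, every other participating row has $m_2<n_2$ and hence multiplicity divisible by $(q-1)$, while $(p-1)p$ is divisible by $(q-1)=pv$ as well; dividing $X$ by $(q-1)$ therefore produces a NAR on $\{w\}\cup\big(\{1,(p-1),(p-1)p\}\otimes\{1,q,\ldots,q^{n_2-1}\}\big)$ in which $w:=(p-1)p/(q-1)=(p-1)/v$ is involved nontrivially. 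Since $v\geq 2$ (else $q=p+1$ is even) we have $w\leq (p-1)/2<p-1$, and whenever $v<p-1$ we also have $w>1$; as the only element of the product set below $p-1$ is the single unit $1$, a short magnitude estimate shows the product set admits no signed subset equal to such a $w$, contradicting the existence of $X$. This settles every case except $v=p-1$.

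The last obstacle, which I expect to be the hardest step, is the boundary case $v=p-1$, i.e.\ $q=p^2-p+1$ (for instance $p=3,\,q=7$). There $w=1$ collides with the unit of the product set, reflecting the honest fact that the multiplicity $(p-1)p$ occurs twice, at rows $(0,n_2)$ and $(2,n_2-1)$, so no counting argument can separate them. To finish I would descend to the actual entries of $M(N)$. Rows $(1,n_2)$ and $(2,n_2)$ are fixed and adjacent in the first coordinate, so by Lemma~\ref{lem:adjacent-rows} and Lemma~\ref{lem:fixed-rows-lincomb} their difference is supported on the columns $(2,*)$ with entries $p^2\phi(q^{m_2'})$; since $\{\phi(q^{m_2'})\}$ is a super sequence, the induced column NAR forces every column $(2,*)$ to be matched. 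I would then combine this column information with the column NAR attached to row $(0,n_2)$ to show that a $\rho$ interchanging $(0,n_2)$ with $(2,n_2-1)$ is incompatible with the matched columns, thereby fixing $(0,n_2)$ and completing the contradiction; making this final compatibility argument airtight is the crux of the proof.
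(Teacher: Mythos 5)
Your argument tracks the paper's own proof up to the decisive fork: fixing rows $(2,n_2)$ and $(1,n_2)$ via the mod-$(q-1)$ case analysis against the hypothesis $(q-1)\nmid(p-1)^2(p+1)$, and deducing that if row $(0,n_2)$ is not fixed then $q-1=pv$ with $v\mid(p-1)$, $v\geq 2$, are exactly the paper's steps. After that, the paper does \emph{not} try to fix row $(0,n_2)$ --- Proposition~\ref{prop:fix-all-coordinates} shows that row cannot be fixed --- and instead gets its contradiction from column NARs: by Lemma~\ref{lem:adjacent-rows} and Lemma~\ref{lem:fixed-rows-lincomb}, the column NAR of the fixed row $(2,n_2)$ splits into a part supported on columns $(2,*)$, impossible because $p(p-1)\cdot\{1,(q-1),(q-1)q,\ldots\}$ is a super sequence, and a part supported on the remaining columns, impossible because $(p-1),(q-1),(p-1)(q-1),(q-1)q,(p-1)(q-1)q,\ldots$ in interlaced order is a super sequence whenever $q-1\geq 2p$. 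That route is uniform in $v$, which is precisely what your route lacks.

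The genuine gap is your case $v=p-1$, i.e.\ $q=p^2-p+1$. This case is not vacuous --- $(p,q)=(3,7)$ and $(7,43)$ satisfy every hypothesis of the theorem --- and you concede you have not proved it (``the crux''). Moreover, the sketch you give would not close it: (a) you appeal to ``the column NAR attached to row $(0,n_2)$,'' but column NARs exist only for \emph{fixed} rows, and in this branch row $(0,n_2)$ is assumed non-fixed; (b) you rule out only the $\rho$ that swaps $(0,n_2)$ with $(2,n_2-1)$, but when $q-1=p(p-1)$ the multiplicities admit further NARs through row $(0,n_2)$, e.g.\ $\phi(p^2)+\phi(p^2q)=\phi(q^2)$ (in divided form $1+p(p-1)=q$) and $1+p(p-1)+p(p-1)q=q^2$, so that row can a priori sit in longer relations that your compatibility argument never sees. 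A secondary weakness: the ``short magnitude estimate'' for $1<w<p-1$ is correct in conclusion but is neither short nor supplied; since $wq=p(p-1)+w$ and, for $v=2$, $(p-1)+p(p-1)=wq+w$, the product set has signed subset sums congruent to $\pm w$ modulo $q$, so one must genuinely verify that $\{w\}\cup\bigl(\{1,p-1,p(p-1)\}\otimes\{1,q,\ldots,q^{n_2-1}\}\bigr)$ is a super sequence rather than argue from element sizes alone. As written, your proposal establishes the theorem only when $q-1\neq p(p-1)$.
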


\begin{proof}

We prove by contradiction, and assume that there exists some cospectral pair $\cosp$. To start, recall that
\[ M(p^2) = \left[ \begin{array}{ccc|c} 1 & -1 & 0 & \times p(p-1) \\ 1 & (p-1) & -p & \times (p-1) \\ 1 & (p-1) & p(p-1) & \times 1 \end{array} \right] \]
and $M(q^{n_2})$ is some $(n_2+1) \times (n_2+1)$ matrix.

Consider any row or column NAR $X$ on $P(N)$. Since row $(2,n_2)$ is fixed and column $(0, 0)$ is matched by Proposition~\ref{prop:duality}, we can ignore the weight $1$. Thus, the one-sided form of $X$ can be written
\[
  a_0(p-1) + a_1 p(p-1) + a_2 (q-1) + a_3 (p-1)(q-1) + a_4 (q-1)p(p-1) + \cdots = 0.
\]
Suppose $a_0 \neq 0$. Without loss of generality $a_0 = 1$. Looking at this equation mod $(q-1)$ gives 
\[
(p-1) + a_1 p(p-1) = 0 \pmod{(q-1)}.
\]
So this is only possible in $3$ cases:
\begin{itemize}
\item $a_1 = 0$, so $(q-1)|(p-1)$, which violates $(p-1)^2 \neq 0 \pmod{q-1}$. 
\item $a_1 = 1$, so $(q-1) | (p-1) + p(p-1)$, which violates $p^2 - 1 \neq 0 \pmod{q-1}$.
\item $a_1 = -1$: so $(q-1) | p(p-1) - (p-1)$, which again violates $(p-1)^2 \neq 0 \pmod{q-1}$.
\end{itemize}
We have concluded $a_0 = 0$. This means $(p-1)$ cannot be nontrivially involved in any of our row or column NARs, so row $(1, n_2)$ is fixed and column $(1,0)$ is matched.

Suppose row $(0, n_2)$ is also fixed. Then we can apply Proposition~\ref{prop:fix-all-coordinates} to prove that there exists a NAR on
\[
\otimes_{j \neq 1} \{1, (p_j -1), \ldots, (p_j - 1) p_j^{n_j-1}\} = \{1, (q-1), \ldots, (q-1)q^{n_2-1}\},
\]
which is impossible since that is a super sequence. Thus, we know row $(0, n_2)$ is not fixed. This means its weight $p(p-1)$ is involved in a row NAR 
\[
p(p-1) + a_2 (q-1) + a_3 (p-1)(q-1) + a_4 (q-1)p(p-1) + \cdots = 0,
\]
so $(q-1)|p(p-1)$. If $\gcd(q-1, p) = 1$, then we would have $(q-1) | (p-1)$, which is a contradiction since $(q-1) \nmid (p-1)^2.$ Therefore we must have $q-1 = kp$ where $k | (p-1)$. In particular, $k \neq 1$ (else $q$ would be even; we are implicitly using here that $p$ is odd), so $q - 1 \geq 2p$.

Recall that rows $(1, n_2)$ and $(2, n_2)$ are both fixed. Lemma~\ref{lem:adjacent-rows} tells us that their corresponding entries are equal except for entries $(2, *)$, in which case they are negatives of each other. Let $D_+$ be the columns $(i,j)$ where $i < 2$ (alternatively, where row $(1,n_2)$ is positive) and $D_-$ be the columns $(i,j)$ where $i = 2$ (alternatively, where row $(1,n_2)$ is negative). We can observe that the sum of the two rows $r_s$ has nonzero entries only on $D_+$ and the difference $r_d$ has nonzero entries only on $D_-$. Furthermore, the nonzero entries are exactly twice their corresponding entries in row $(2,n_2)$. As an example, if $p=3$, $q=7$, and $n_2=2$, the rows $(1,n_2)$ and $(2,n_2)$, respectively, are
\[
  \begin{array}{c|ccccccccc|c} \hline
  (1,n_2) & 1 & 6 & 42 & 2 & 12 & 84 & -6 & -36 & -252 & (\times 2) \\
(2,n_2) & 1 & 6 & 42 & 2 & 12 & 84 & 6 & 36 & 252 & (\times 1) \\
r_s/2 & 1 & 6 & 42 & 2 & 12 & 84 & 0 & 0 & 0 & \\
r_d/2 & 0 & 0 & 0 & 0 & 0 & 0 & 6 & 36 & 252 & \\
\hline
\end{array}
\]

Applying Lemma~\ref{lem:fixed-rows-lincomb} to $r_s/2$ and $r_d/2$, we obtain that if the one-sided-form of the column NAR on $(2, n_2)$ is
\[
X: \sum_{i \in D_+} a_i M(N)[(2, n_2), i] + \sum_{i \in D_-} a_i M(N)[(2, n_2), i] = 0,
\]
we must also have individually that
\[
X_+: \sum_{i \in D_+} a_i M(N)[(2, n_2), i]  = 0
\]
and
\[
X_-: \sum_{i \in D_-} a_i M(N)[(2, n_2), i] = 0,
\]
so the column NAR for $(2, n_2)$ ``splits'' into a column NAR $X_+$ on $r_s/2$ and also a column NAR $X_-$ on $r_d/2$. Looking at $r_d/2$, if any of the columns in $D_-$ are involved nontrivially in $X_-$, we must have a NAR on 
\[
  p(p-1), p(p-1)(q-1), p(p-1)(q-1)q, \ldots, p(p-1)(q-1)q^{n_2-1}.
\]
These are exactly $p(p-1)$ times the elements in $P(q^k)$, which form a super sequence and thus cannot form NARs.

Thus, all the columns in $D_-$ are matched, and $X = X_+$ only involves columns in $D_+$. These have weights (excluding the first column with weight $1$), 
\begin{align*}
     & \{(q-1), (q-1)q, (q-1)q^2 \ldots, (q-1)q^{n_2-1}\} \\
    \cup & \{(p-1), (p-1)(q-1), (p-1)(q-1)q, (p-1)(q-1)q^2 \ldots, (p-1)(q-1)q^{n_2-1}\}.
\end{align*}

We now use our earlier observation that $q=kp + 1$ for some $k > 1$. Put the above weights in the ``interlaced'' order: 
\[
(p-1), (q-1), (p-1)(q-1), (q-1)q, (p-1)(q-1)q, (q-1)q^2, \ldots
\]
We can check inductively that this is a super sequence (this trick also appears in \cite{monius2023many}):
\begin{enumerate}
    \item $(p-1) < (q-1)$ since $q-1 = kp$, $k > 1$.
    \item $(p-1) + (q-1) < (p-1)(q-1)$ since $(p-1), (q-1)$ are both $\geq 3$. 
    \item If we sum the first $2t+1$ elements (where $t \geq 1$), we obtain
    \begin{align*}
    & (p-1) + (q-1) + \cdots + (q-1)q^{t-1} + (p-1)(q-1)q^{t-1} \\
    &= (p-1)(q^{t}) + (q-1)(q^{t}-1)/(q-1) \\
    & = pq^t -1 < (q-1)q^{t}.
    \end{align*}
    \item If If we sum the first $2t$ elements (where $t \geq 2$) we obtain
    \begin{align*}
    & (p-1) + (q-1) + \cdots + (p-1)(q-1)q^{t-1} + (q-1)q^t \\
    &= pq^t -1 + (q-1)q^{t} \\
    &= (p + q - 1)q^t - 1\\ 
    &< (p-1)(q-1)q^{t}.
    \end{align*}
\end{enumerate}
As a result, the existence of such an $X$ is impossible, and we arrived at a contradiction.
\end{proof}

Compared to Corollary~\ref{cor:pkqk}, this result assumes less because we allow for the case where row $(0, n_2)$ were not fixed. 

\section{$N = p^2q^2$}
\label{sec:p2q2}


In this section, we prove that all $N = p^2q^2$ satisfy the ISAP. Without loss of generality, we assume $p < q$ for this entire section.

We first compute $M(N)$. Since the coordinates in $I(N)$ are all single digits, for this section we will suppress the parentheses. For example, we use ``$21$'' as shorthand for $(2,1)$. The table follows, where we omit the unused leftmost column $00$:
\[
\left[  \begin{array}{c|cccccccc|c}
& 01 & 02 & 10 & 11 & 12 & 20 & 21 & 22 & \text{mult.} \\
\hline 
  00 & -1 & 0 & -1 & 1 & 0 & 0 & 0 & 0 & \times \makecell{pq(p-1)\cdot \\ (q-1)} \\
  \hline 
01 & (q-1) & -q & -1 & -(q-1) & q & 0 & 0 & 0  & \times \makecell{p(p-1) \cdot \\ (q-1)} \\
\hline 
  02 & (q-1) & q(q-1) & -1 & -(q-1) & -q(q-1) & 0 & 0 & 0  & \times p(p-1) \\
\hline 
  10 &  -1 & 0 & p-1 & -(p-1) & 0 & -p & p & 0  & \times \makecell{q(p-1)\cdot \\ (q-1)} \\
\hline 
  11 &  (q-1) & -q & p-1 & \makecell{(p-1) \cdot \\ (q-1)} & -(p-1)q & -p & -(q-1)p & pq  & \times \makecell{(p-1)\cdot \\ (q-1)} \\
\hline 
  12 & (q-1) & q(q-1) & p-1 & \makecell{(p-1) \cdot \\ (q-1)} & \makecell{q(p-1) \cdot \\ (q-1)} & -p & -(q-1)p & -pq(q-1)  & \times (p-1) \\
\hline 
   20 & -1 & 0 & (p-1) & -(p-1) & 0 & p(p-1) & -p(p-1) & 0  & \times q(q-1) \\
\hline 
  21 & (q-1) & -q & (p-1) & \makecell{(p-1) \cdot \\ (q-1)} & -(p-1)q & p(p-1) & \makecell{p(p-1) \cdot \\ (q-1)} & -qp(p-1)  & \times (q-1) \\
\hline 
  22 & (q-1) & q(q-1) & (p-1) & \makecell{(p-1) \cdot \\ (q-1)} & \makecell{q(q-1) \cdot\\ (p-1)} & p(p-1) & \makecell{p(p-1) \cdot \\ (q-1)} & \makecell{pq(p-1) \cdot \\ (q-1)}  & \times 1 
 \end{array} \right]
\]

\begin{lem}
\label{lem:small-values}
If $p < 11$, $N = p^2q^2$, and $p < q$, then $N$ satisfies the ISAP.
\end{lem}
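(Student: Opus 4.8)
The plan is to split on $p\in\{2,3,5,7\}$ and reduce each case either to Theorem~\ref{thm:p2qk} or to a short explicit check. For the odd primes $p\in\{3,5,7\}$, Theorem~\ref{thm:p2qk} already gives the ISAP whenever $(q-1)\nmid(p-1)^2(p+1)$. Inspecting its proof, the hypothesis enters only through the two weaker facts $(q-1)\nmid(p-1)^2$ and $(q-1)\nmid p^2-1$, so the identical argument succeeds unless $q-1$ divides $(p-1)^2$ or $p^2-1$. For each fixed odd $p$ this leaves only finitely many primes $q$: namely $q=5$ for $p=3$, $q\in\{7,13,17\}$ for $p=5$, and $q\in\{13,17,19,37\}$ for $p=7$. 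Each of these is a single explicit modulus $N=p^2q^2$.

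The prime $p=2$ I would treat uniformly for $q\ge 5$. Assume a cospectral pair $\cosp$; by Proposition~\ref{prop:duality} the row $22$ is fixed. Because $p-1=1$, the weights in $P(N)$ that are not divisible by $q-1$ are exactly those of the three rows $02$, $12$, $22$, with weights $2,1,1$. Hence reducing any nontrivial induced row NAR modulo $q-1$ yields a relation of the form $2b_0+b_1\equiv 0\pmod{q-1}$, where $b_0,b_1\in\{-1,0,1\}$ record the signed involvement of rows $02$ and $12$; since $|2b_0+b_1|\le 3<q-1$ for $q\ge 5$, we get $b_0=b_1=0$, so rows $02$ and $12$ are fixed as well. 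With all three rows $02,12,22$ fixed, Proposition~\ref{prop:fix-all-coordinates} (with $i$ the index of $p$) would force a NAR on $\{1,(q-1),(q-1)q\}$, which is impossible since that list is a super sequence. This contradiction disposes of $p=2$ with $q\ge 5$, and the lone remaining value $q=3$ gives $N=36=2^2 3^2$, whose matrix $M(36)$ is already written out above and can be checked by hand.

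It remains to settle the finite list of genuine exceptions for odd $p$. For each such explicit $N$ I would assume a cospectral pair and run the Section~\ref{sec:NAR} machinery directly on the $9\times 9$ matrix $M(N)$: beginning from Proposition~\ref{prop:duality} and repeatedly reducing the induced row and column NARs modulo $q-1$ (and, after dividing out common factors, modulo $q$), I would force successive rows to be fixed and columns to be matched until only $A=B$ survives. I expect the main obstacle to sit exactly here: in each exceptional case $P(N)$ genuinely contains a NAR (for instance $(p-1)+p(p-1)=(p-1)(q-1)$ when $q=p+2$, or $p(p-1)=(p-1)+(q-1)$ when $q-1=(p-1)^2$), so one cannot argue from $P(N)$ alone and must exploit the finer structure of $M(N)$ through the $D_+/D_-$ splitting and Lemma~\ref{lem:adjacent-rows} to rule out a cospectral pair.
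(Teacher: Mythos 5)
Your reduction is correct and in fact sharper than the paper's, but it does not finish the proof: the finitely many exceptional moduli are never actually handled. For odd $p\in\{3,5,7\}$ you correctly observe that the proof of Theorem~\ref{thm:p2qk} invokes its hypothesis only through $(q-1)\nmid(p-1)^2$ and $(q-1)\nmid p^2-1$, and your resulting exception list ($q=5$; $q\in\{7,13,17\}$; $q\in\{13,17,19,37\}$) is right; your $p=2$, $q\ge 5$ argument (rows $02$, $12$, $22$ carry the only weights $2,1,1$ not divisible by $q-1$, hence must be fixed, hence Proposition~\ref{prop:fix-all-coordinates} forces a NAR on the super sequence $\{1,\,q-1,\,q(q-1)\}$) is complete and correct. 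But for the eight remaining odd cases and for $N=36$ you only state what you \emph{would} do --- ``run the Section~\ref{sec:NAR} machinery,'' ``force successive rows to be fixed'' --- and you yourself flag that in every one of these cases $P(N)$ contains genuine NARs, so there is no guarantee that the forcing terminates with $A=B$. The paper's later tools cannot be quoted here either: Lemma~\ref{lem:first-row} needs $p\ge 7$ and $q\ge p+4$, and Lemma~\ref{lem:twin-primes} cites the very lemma you are proving to assume $p\ge 11$, so cases such as $(p,q)=(3,5)$ or $(5,7)$ would require new ad hoc arguments or an explicit finite verification. A plan plus an acknowledged obstacle is not a proof of the hard part; as written, the lemma remains unproved.

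The paper closes this gap differently: it shows that for $q>p^3$ the weights of $P(N)$ form a super sequence (so no NARs exist and the ISAP follows), and then disposes of every remaining $q<p^3$ for $p\in\{2,3,5,7\}$ by an explicit computer search. Your route trades a larger but completed verification for a much smaller but uncompleted one; to make it a proof you must actually verify your nine explicit moduli (by computer or exhaustive hand computation), exactly as the paper does for its longer list. One genuine advantage of your write-up is worth keeping: your $p=2$ argument is rigorous for all $q\ge 5$, whereas the paper's super-sequence claim degenerates when $p-1=1$ (its listed sequence then contains $(q-1)$ and $q(q-1)$ twice each), so the paper's own reduction to $q<8$ is shaky precisely at $p=2$, and your argument repairs that.
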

\begin{proof}
We can see this Lemma as an example of Theorem~\ref{thm:far-apart} "in practice." For any particular choice of $p$, note that if $q > p^3$, the sequence 
\[
(p-1), p(p-1), (q-1), (p-1)(q-1), p(p-1)(q-1), q(q-1), q(p-1)(q-1), pq(p-1)(q-1) 
\]
is an increasing super sequence, so all the rows must be fixed. This means for us to have a counterexample, for any fixed $p$ we only have to search up to $q = p^3$. We used a computer program to search the values of $p \in \{2, 3, 5, 7\}$ up to their corresponding upper bounds and found no counterexamples.
\end{proof}

A seemingly small but very useful consequence of Lemma~\ref{lem:small-values} is we can assume both $p$ and $q$ are odd. In fact, since we will frequently implicitly use $(p-1) \neq 1$ to argue certain numbers are different, allowing $p=2$ would make many of our arguments false. Our ``precomputation'' also allows us to prove an amusing (and necessary for later!) condition:
\begin{lem}
\label{lem:twin-primes}
If $N = p^2q^2$ and $p$ and $q$ are twin primes, then $N$ satisfies the ISAP.
\end{lem}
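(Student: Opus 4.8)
The plan is to mirror the structure of the proof of Theorem~\ref{thm:p2qk}, reducing the question to which rows a cospectral pair can leave non-fixed, and then to invoke Proposition~\ref{prop:fix-all-coordinates}. Throughout I write $q = p+2$ and use Lemma~\ref{lem:small-values} to assume $p \geq 11$ (so in particular $p,q$ are odd). Assume for contradiction that a cospectral pair $\cosp$ with $\rho$ simplified exists. I abbreviate the nine entries of $P(N)$ by $e_{m_1m_2} = \phi(p^{m_1})\phi(q^{m_2})$; explicitly $e_{01}=p+1$, $e_{11}=(p-1)(p+1)$, $e_{02}=(p+1)(p+2)$, $e_{21}=p(p-1)(p+1)$, $e_{12}=(p-1)(p+1)(p+2)$, and $e_{22}=p(p-1)(p+1)(p+2)$. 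The row indexed $(2,m_2)$ carries multiplicity $e_{0,\,2-m_2}$, so rows $22,21,20$ carry $e_{00}=1$, $e_{01}$, and $e_{02}$ respectively.

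The heart of the argument is a complete classification of the NARs on $P(N)$. First, reducing a one-sided NAR modulo $q-1=p+1$ annihilates every element except $e_{00}\equiv 1$, $e_{10}\equiv -2$, $e_{20}\equiv 2$; since the coefficients lie in $\{-1,0,1\}$ and $p+1\geq 12$, the congruence forces $a_{00}=0$ and $a_{10}=a_{20}$. Using the twin-prime identity $e_{10}+e_{20}=(p-1)(p+1)=e_{11}$, every NAR collapses to a relation among $e_{11},e_{01},e_{02},e_{21},e_{12},e_{22}$ in which $e_{11}$ may carry an effective coefficient up to $\pm 2$. A short magnitude analysis then finishes the classification: the $p^4$-term forces the coefficient of $e_{22}$ to vanish; the $p^3$-terms force the coefficients of $e_{12}$ and $e_{21}$ to be opposite, which via $e_{12}-e_{21}=2e_{11}$ feeds back into the $e_{11}$ coefficient; and matching the $p^2$- and $p^1$-coefficients, using $e_{02}-e_{11}=3(p+1)$, forces the coefficients of $e_{02}$ and $e_{01}$ to vanish. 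The upshot is that the only NARs on $P(N)$ are $e_{10}+e_{20}=e_{11}$ and $e_{10}+e_{20}+e_{11}+e_{21}=e_{12}$, and in particular none of $e_{00},e_{01},e_{02}$ is ever nontrivially involved.

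Since the nine multiplicity values are pairwise distinct for $p\geq 11$, non-involvability of $e_{00},e_{01},e_{02}$ implies that rows $22,21,20$ cannot lie in a nontrivial component of $G_\rho$ (each induced row NAR is itself a NAR on $P(N)$), so all three are fixed. Now all $n_2+1=3$ rows indexed $(2,m_2)$ are fixed, and Proposition~\ref{prop:fix-all-coordinates} applied with $i=2$ yields a NAR on $\{1,(p-1),p(p-1)\}$. But this set is a super sequence, so by Proposition~\ref{prop:super-sequence} it admits no NAR—a contradiction. Hence no cospectral pair exists, and $N$ satisfies the ISAP by Proposition~\ref{prop:ISAP=NAR}.

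I expect the main obstacle to be the NAR classification: making the modular reduction and the ensuing magnitude bookkeeping fully rigorous, in particular confirming that $p\geq 11$ is large enough for every inequality and that the twin-prime identities $e_{12}-e_{21}=2e_{11}$ and $e_{02}-e_{11}=3(p+1)$ are precisely what rule out involvement of $e_{12}/e_{21}$ and of $e_{02}/e_{01}$. This is exactly the place where the hypothesis $q=p+2$ enters essentially (for a general ``bad'' $q$ with $(q-1)\mid (p-1)^2(p+1)$ the value $e_{01}=q-1$ can become involvable, and rows $20,21$ need no longer be fixed). Everything after the classification is a direct appeal to results already established in the excerpt, and the reduction to $p\geq 11$ via Lemma~\ref{lem:small-values} conveniently disposes of any coincidences among the multiplicity values for small $p$.
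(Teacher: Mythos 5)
Your proof is correct, and its overall skeleton coincides with the paper's: use Lemma~\ref{lem:small-values} to assume $p \geq 11$, argue that rows $20$ and $21$ (in addition to the always-fixed row $22$) must be fixed, and then invoke Proposition~\ref{prop:fix-all-coordinates} with $i=2$ together with the super sequence $\{1,(p-1),p(p-1)\}$ to reach a contradiction. The difference is in how you fix rows $20$ and $21$, and there your route is genuinely different. The paper reduces modulo $p-1$: every weight in $P(N)$ other than $e_{01}=q-1\equiv 2$ and $e_{02}=q(q-1)\equiv 6 \pmod{p-1}$ is divisible by $p-1$, so the combined contribution of these two weights to any disjoint row NAR must vanish mod $p-1$; since the possible contributions are $\pm 2, \pm 4, \pm 6, \pm 8$ and $p-1\geq 10$, neither weight can be involved, and the argument ends in a few lines. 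You instead reduce modulo $q-1=p+1$ --- which constrains the complementary weights $e_{00}, e_{10}, e_{20}$ rather than the ones you ultimately care about --- and must then run a full magnitude-based classification of all NARs on $P(N)$ to rule out involvement of $e_{01}$ and $e_{02}$. I checked your classification and it is right: the identities $e_{10}+e_{20}=e_{11}$, $e_{12}-e_{21}=2e_{11}$, and $e_{02}-e_{11}=3(p+1)$ hold, every magnitude bound clears comfortably at $p\geq 11$, and the only NARs are indeed the two you list, so $e_{00},e_{01},e_{02}$ are never nontrivially involved (your appeal to the pairwise distinctness of the nine weights, needed to pass from values to row indices, is also valid). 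Your approach is heavier, but it buys a complete description of the NARs in the twin-prime case --- in particular it exhibits explicitly the nontrivial relations whose existence (noted in Section~\ref{sec:conclusion}) is exactly why one cannot prove this lemma by showing $P(N)$ has no NARs at all --- whereas the paper's mod-$(p-1)$ computation is the more economical targeted exclusion of precisely the two weights that Proposition~\ref{prop:fix-all-coordinates} requires.
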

\begin{proof}
In this situation, $(q-1) = p+1$ and $q = p+2$. Because row $22$ is fixed, it suffices to consider the weights of the other $8$ rows. These have weights
\[
(q-1), q(q-1), (p-1), (q-1)(p-1), p(p-1), q(p-1)(q-1), p(p-1)(q-1), pq(p-1)(q-1). 
\]
Substituting, we get that they have the weights
\[
(p+1), (p+2)(p+1), (p-1), (p+1)(p-1), p(p-1), (p+2)(p-1)(p+1), p(p-1)(p+1), p(p+2)(p-1)(p+1).
\]
Note that the first two weights (corresponding to rows $21$ and $20$ respectively) are the only ones not immediately divisible by $(p-1)$, so their combined involvement in any (disjoint) row NAR must be divisible by $(p-1)$. When $p > 3$ we cannot have $(p-1) | p + 1$. When $p > 5$ we cannot have $(p-1) | (p+1)^2$. When $p > 7$ we cannot have $(p-1) | (p+2)(p+1)$, when $p > 9$ we cannot have $(p-1) | [(p+2)(p+1) + (p+1)]$, so since we know $p \geq 11$ by Lemma~\ref{lem:small-values}, these 2 elements cannot be involved in a NAR. Their corresponding rows $21$ and $20$ thus must be fixed.

As in the proof of Theorem~\ref{thm:general-cos}, if the rows $22, 21, 20$ are all fixed and $\{1, (p-1), (p-1)p\}$ has no NAR (which is true since $p > 2$), $N$ satisfies the ISAP.
\end{proof}

\begin{lem}
\label{lem:first-row}
If $N = p^2q^2$ with $p < q$, then the values $pq(p-1)(q-1)$ and $q(q-1)(p-1)$ cannot be involved nontrivially in any NARs on $P(N)$. As a consequence, if $A$ and $B$ are cospectral with this $N$, rows $00$ and $10$ must be fixed and columns $22$ and $12$ must be matched.
\end{lem}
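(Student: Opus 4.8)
The plan is to show that the two target values are simply too large to be balanced by the remaining elements of $P(N)$, so that a \emph{magnitude} (super-sequence-style) argument rules out their nontrivial involvement in any disjoint NAR. First I would record the nine entries of
\[
P(N) = \{1,\, (p-1),\, p(p-1),\, (q-1),\, (p-1)(q-1),\, q(q-1),\, p(p-1)(q-1),\, (p-1)q(q-1),\, pq(p-1)(q-1)\},
\]
and note that $pq(p-1)(q-1) = \phi(2,2)$ and $(p-1)q(q-1) = \phi(1,2)$ are its two largest members. Since every element of $P(N)$ is positive, if a value $v$ sits on one side of a disjoint NAR then the opposite side is bounded above by the sum of all \emph{other} elements; thus it suffices to show that each target value strictly exceeds the sum of the elements that could appear opposite it. Before starting I would invoke Lemma~\ref{lem:small-values} to assume $p \ge 11$ (so in particular $p,q$ are odd) and Lemma~\ref{lem:twin-primes} to assume $p,q$ are not twin primes; since $q-p$ is then even and positive, this yields $q \ge p+4$, which is exactly the slack the second inequality needs.

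For the largest value I would compute the sum of the other eight elements, grouping by the power of $q$ and collapsing each block via $1+(p-1)+p(p-1) = p^2$, which gives
\[
\sum_{a \neq (2,2)} \phi(a) = p^2 + (q-1)p^2 + pq(q-1) = pq(p+q-1).
\]
Then $\phi(2,2) = pq(p-1)(q-1) > pq(p+q-1)$ reduces to $(p-2)(q-2) > 2$, which holds for all $p \ge 3$; hence $\phi(2,2)$ cannot lie in any disjoint NAR. Having removed $\phi(2,2)$ from contention, for the second value I subtract: the remaining seven elements sum to $pq(p+q-1) - (p-1)q(q-1) = q(p^2+q-1)$, and $\phi(1,2) = (p-1)q(q-1) > q(p^2+q-1)$ reduces to $q(p-2) > (p-1)(p+2)$, i.e.\ $q > p+3+\tfrac{4}{p-2}$. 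For $p \ge 11$ this is implied by $q \ge p+4$, which is precisely why the twin-prime case must be peeled off first. I conclude that neither $\phi(2,2)$ nor $\phi(1,2)$ is involved nontrivially in any NAR on $P(N)$.

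Finally I would translate this into the stated consequence via the row/column dictionary of Section~\ref{sec:spectral}. The multiplicities (weights) of rows $00$ and $10$ are exactly $\phi(2,2)$ and $\phi(1,2)$, so because those values cannot appear in a nontrivial row NAR, the induced (simplified) row relations force rows $00$ and $10$ to be fixed. Dually, since the bottom row $22$ is always fixed by Proposition~\ref{prop:duality}, it induces a column NAR whose entries are exactly the bottom-row values $\{\phi(m_1,m_2)\}$, i.e.\ the elements of $P(N)$; the bottom entries of columns $22$ and $12$ are $\phi(2,2)$ and $\phi(1,2)$, so those columns must be matched. I expect the main obstacle to be the second inequality: unlike the top value, $\phi(1,2)$ is \emph{not} unconditionally larger than the sum of its competitors — it fails exactly in the twin-prime regime — so the argument genuinely depends on first discharging Lemmas~\ref{lem:small-values} and~\ref{lem:twin-primes} to secure $p \ge 11$ and $q \ge p+4$.
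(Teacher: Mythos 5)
Your proposal is correct and follows essentially the same strategy as the paper: first discharge Lemmas~\ref{lem:small-values} and~\ref{lem:twin-primes} to secure $p\ge 11$, oddness, and $q\ge p+4$, then show each of the two largest values of $P(N)$ strictly exceeds the sum of all elements that could appear opposite it in a disjoint NAR (the paper bounds $pq(p-1)(q-1)$ against half the total $p^2q^2$ and bounds $q(q-1)(p-1)$ via an explicit difference computation, while you compute the exact competitor sums $pq(p+q-1)$ and $q(p^2+q-1)$, but this is the same magnitude argument). Your translation into fixed rows and matched columns via row NARs and the column NAR of the fixed bottom row also matches the paper's reasoning.
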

\begin{proof}
The sum of $I(N)$ is $p^2q^2$. By Lemma~\ref{lem:small-values}, $p,q$ are odd. By elemntary algebra, this implies $2(p-1)(q-1) > pq$. Thus, $pq(p-1)(q-1) > p^2q^2/2$, which implies that this single value is greater than the sum of all other values in $I(N)$. This means if $pq(p-1)(q-1)$ appears on one side of an additive relation on $I(N)$, it must appear on the other as well, so it cannot be involved nontrivally in any NARs on $I(N)$. Row $00$ and column $22$ correspond to weight $pq(p-1)(q-1)$, so they must be fixed and matched respectively.

We can now remove the value $pq(p-1)(q-1)$ from our consideration and compare the second largest element $q(q-1)(p-1)$ with the other values in $P(N)$. consider 
\begin{align*}
& q(q-1)(p-1) - q(q-1) - (q-1) - (p-1) - (p-1)(q-1) - p(p-1) - p(p-1)(q-1) \\
&= q(q-1)(p-2) - (qp-1) - qp(p-1) \\
&= q(q-1)(p-2) + 1 - qp^2 \\
&\geq q[(p+3)(p-2) - p^2] + 1,
\end{align*}
which is true when $p \geq 7$. The inequality in the last line used $q \geq p+4$, which is true since $p$ and $q$ cannot be twin primes by Lemma~\ref{lem:twin-primes}. This means $q(q-1)(p-1)$ cannot be involved nontrivially in any NAR on $P(N)$ either, so the corresponding row $10$ and column $12$ must be fixed and matched respectively.
\end{proof}

We are now ready to prove our main result.

\begin{thm}
\label{thm:p2q2}
Let $N = p^2q^2$ with $p < q$. Then $N$ satisfies the ISAP.
\end{thm}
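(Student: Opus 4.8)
The plan is to argue by contradiction: assume a nontrivial cospectral pair $\cosp$ exists and derive an impossible additive relation. By Lemma~\ref{lem:small-values} I may assume $p \ge 11$ (so in particular $p$ and $q$ are both odd), and by Lemma~\ref{lem:twin-primes} I may assume $q \ge p+4$. Lemma~\ref{lem:first-row} already hands me that rows $00$, $10$, $22$ are fixed and columns $00$, $12$, $22$ are matched, so the only weights in $P(N)$ that can be involved nontrivially in a row NAR are the six values $(p-1)$, $(q-1)$, $p(p-1)$, $(p-1)(q-1)$, $q(q-1)$, $p(p-1)(q-1)$ coming from rows $12$, $21$, $02$, $11$, $20$, $01$ respectively.

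The key reduction is to aim at only two of these rows. If I can show that rows $02$ and $12$ are fixed, then together with the always-fixed row $22$ (Proposition~\ref{prop:duality}) all three rows $(m_1,2)$, $m_1\in\{0,1,2\}$, are fixed, and Proposition~\ref{prop:fix-all-coordinates} applied with $i=1$ forces a NAR on $\{1,(q-1),q(q-1)\}$. Since that set is a super sequence, Proposition~\ref{prop:super-sequence} makes this impossible, giving the contradiction. Thus it suffices to prove the two weights $p(p-1)$ and $(p-1)$ are never involved nontrivially. (Symmetrically, fixing rows $20$ and $21$ would finish via $i=2$; I expect the $i=1$ route to be cleaner because it pairs with the larger modulus $q-1$.)

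To control rows $02$ and $12$, I reduce any row NAR modulo $q-1$. Every one of the six weights except $p(p-1)$ and $(p-1)$ is divisible by $q-1$, so writing $\epsilon_{02},\epsilon_{12}\in\{-1,0,1\}$ for the signed involvement of these two rows, the relation collapses to
\[
(p-1)\bigl(\epsilon_{02}\,p + \epsilon_{12}\bigr) \equiv 0 \pmod{q-1}.
\]
The absolute value of the left-hand side is at most $p^2-1$, so the case $q > p^2$ is immediate: the left-hand side must vanish, forcing $\epsilon_{02}=\epsilon_{12}=0$ and fixing rows $02,12$. For $p < q \le p^2$ the congruence can hold only if $q-1$ divides one of $(p-1)^2$, $p(p-1)$, or $p^2-1$ (the residual value $\pm(p-1)$ is too small to be a multiple of $q-1>p-1$). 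This is a finite, explicit list of arithmetic cases to dispatch.

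The main obstacle is exactly this list, and it is where the result goes strictly beyond Theorem~\ref{thm:p2qk}: the hypothesis there, $(q-1)\nmid(p-1)^2(p+1)$, is precisely what discards $q-1\mid(p-1)^2$ and $q-1\mid(p^2-1)$, whereas here I must confront them. I would handle $q-1\mid p(p-1)$ with $q-1\nmid(p-1)$ by noting that primality of $p$ forces $p\mid q-1$, hence $q-1=kp$ with $k\mid(p-1)$ and $k>1$, and then run the ``interlaced'' super-sequence argument on the row weights as in Theorem~\ref{thm:p2qk}, which kills any row NAR touching $p(p-1)$ or $(p-1)$. For $q-1\mid(p-1)^2$ and $q-1\mid(p^2-1)$ (both of which force $q=O(p^2)$), I would combine the same modular bookkeeping with the column NARs supplied by the fixed rows $00$, $10$, $22$ through Lemma~\ref{lem:fixed-rows-lincomb}: a nontrivial involvement of $p(p-1)$ or $(p-1)$ would have to be balanced by a specific multiset drawn from $\{(q-1),(p-1)(q-1),q(q-1),p(p-1)(q-1)\}$, and dividing the resulting relation through by $q-1$ confronts a super sequence in the $q$-powers. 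In every case the outcome is that rows $02$ and $12$ are fixed, at which point the Proposition~\ref{prop:fix-all-coordinates} reduction closes the proof.
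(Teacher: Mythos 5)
Your reduction is sound as far as it goes: if rows $02$ and $12$ were fixed, then Proposition~\ref{prop:fix-all-coordinates} with $i=1$ and the super sequence $\{1,(q-1),q(q-1)\}$ would indeed finish, and your mod-$(q-1)$ computation correctly disposes of the case $q>p^2$ and correctly isolates the three divisibility cases $q-1\mid(p-1)^2$, $q-1\mid p(p-1)$, $q-1\mid p^2-1$. The gap is that your plan for those three cases cannot work, and they are where the entire content of the theorem lives. Concretely, take $p=11$, $q=101$ (so $q-1=(p-1)^2$): then $(p-1)+(q-1)=p(p-1)$ is a genuine NAR on $P(N)$ involving both weights you want to exclude. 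Or take $p=13$, $q=157$ (so $q-1=p(p-1)$): then $p(p-1)=(q-1)$ is an outright equality of two weights. In such cases no modular bookkeeping and no super-sequence argument on the row multiplicities can show rows $02$ and $12$ are fixed, because the multiplicities themselves admit NARs; the paper's concluding section states exactly this, namely that plausible NARs exist involving every weight outside $\{1,q(p-1)(q-1),pq(p-1)(q-1)\}$, so no row outside $\{00,10,22\}$ can be assumed fixed a priori. Your appeal to the interlaced super sequence of Theorem~\ref{thm:p2qk} also misfires: there it is applied to the \emph{column} weights of $D_+$, which do not contain $p(p-1)$, and only after rows $(1,n_2)$ and $(2,n_2)$ have been fixed using the hypothesis $(q-1)\nmid(p-1)^2(p+1)$ --- precisely the hypothesis you do not have here. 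Finally, in your last fallback, a nontrivial involvement of $p(p-1)$ balanced by $(q-1)$ alone (the $p=13$, $q=157$ situation) divides through by $q-1$ to give $1=1$, not a confrontation with any super sequence.

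What the paper does in this regime is qualitatively different: it does not try to fix more rows. Instead it exploits the column NARs of the already-fixed rows (the difference of rows $22$ and $00$ reduced mod $q$, then row $10$ on its own) to force columns $20,21$ to travel together and columns $10,11$ to travel together, pins the grouped column NAR down to $(q-1)+(p-1)=p(p-1)$ (so, up to common columns, $A=\{02,10,11\}$ and $B=\{20,21\}$), shows row $01$ cannot be fixed, solves for the signs in the row NAR through $01$ (obtaining $a_{01}=-1$, $a_{11}=a_{20}=1$), and finally contradicts the induced equalities $v_A[01]=v_B[11]=v_B[20]$ by computing $v_{A'}[20]-v_{A'}[11]=-(p-2)q\neq pq=v_{B'}[20]-v_{B'}[11]$. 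Some argument of this kind --- engaging the actual entries of $M(N)$ and the column membership of $A$ and $B$, not just the row multiplicities --- is unavoidable, and it is absent from your proposal.
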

\begin{proof}
We prove by contradiction. Suppose not, then there exist some cospectral $A$ and $B$. We know that rows $22$ and $00$ are fixed. By Lemma~\ref{lem:fixed-rows-lincomb}, we know that there exists a column NAR for the difference between these two rows
\[
\begin{bmatrix} 01 & 02 & 10 & 11 & 20 & 21 \\ 
\hline
q & q(q-1) & 0 & q(p-1) & p(p-1) & p(p-1)(q-1)  \end{bmatrix}.
\]
In the expression above, we omitted columns $00$, $12$, $22$ by Lemma~\ref{lem:first-row} because we only care about nontrivial contributions to this NAR. Taken mod $q$, the entries become 
\[
\begin{bmatrix} 0 & 0 & 0 & 0 & p(p-1) & -p(p-1) \end{bmatrix} \pmod{q}.
\]
Because $q > p$, $p(p-1) \neq 0 \pmod{q}$. This means the last two columns $20$ and $21$ must appear together (in exactly one of $A$ or $B$) or be matched. Looking at row $10$ individually, which has entries
\[
\begin{bmatrix} 01 & 02 & 10 & 11 & 20 & 21 \\ 
\hline
-1 & 0 & (p-1) & -(p-1) & -p & p  \end{bmatrix},
\]
this means the columns $20$ and $21$ contributes a net $0$ to the column sums for $A$ or $B$ for this row. Therefore, column $01$ must be matched, since there is no way (implicitly using $p > 3$) to cancel out $-1$ with just a single copy of $(p-1)$ and $-(p-1)$. Thus,  columns $10$ and $11$ must appear together (in exactly one of $A$ or $B$) or be matched. 

We can restate our observations about rows $20, 21, 10, 11$ as saying that we can write down NAR on $3$ columns $\{02, 10+11, 20+21\}$ where, for example, column $10+11$ corresponds to the sum of the two corresponding column weights. We obtain 
\[
\begin{bmatrix}
02 & 10+11 & 20+21 \\
\hline
q(q-1) & q(p-1) & qp(p-1)
\end{bmatrix},
\]
which we can divide out by $q$ to obtain $(q-1), (p-1), p(p-1)$. If $(p-1) + p(p-1) = (q-1)$, then $q=p^2$, a contradiction. This means this NAR must be $(q-1) + (p-1) = p(p-1)$, which means, without loss of generality, $A$ has columns  $\{02, 10, 11\}$ and $B$ has columns $\{20, 21\}$ (and some subset of the other columns appear in both). This means row $01$ must be not fixed, because otherwise we would obtain the equation
\[
(-q) + (-1) + (-(q-1)) = 0,
\]
which is impossible. 

We also know that $I(N)$ must induce at least one (disjoint) row NAR, where rows $00, 22, 10$ are not involved because they are fixed. Take any such NAR $X$ and write it in the one-sided form
\[
a_{21}(q-1) + a_{20}q(q-1) + a_{12}(p-1) + a_{11}(p-1)(q-1) + a_{02}p(p-1) + a_{01}p(p-1)(q-1) = 0,
\]
where each $a_{i}$ is in $\{-1, 0, 1\}$ depending on if it appears on the left-hand-side, neither, or the right-hand-side of $X$ respectively. Since the terms on $a_{11}$ and $a_{01}$ are divisible by $(p-1)(q-1)$, we must have (switching signs for the $a_i$'s if necessary)
\[
a_{12}(p-1) + a_{02}p(p-1) + a_{21} (q-1) + a_{20} q(q-1) = k(p-1)(q-1),
\]
for some $k \in \ZZ, k \geq 0$. The maximum possible sum of the $4$ terms on the left is $(p-1) + p(p-1) + (q-1) + q(q-1) = (p+1)(p-1) + (q+1)(q-1)$. So we know that $k$ is at most 
\begin{align*}
\frac{(p+1)(p-1) + (q+1)(q-1)}{(p-1)(q-1)} & = \frac{p+1}{q-1} + \frac{q+1}{p-1} \\
& = \frac{p+1}{(p-1)^2} + \frac{(p-1)^2 + 2}{p-1} \\
& = (p-1) + \frac{p+1 + 2(p-1)}{(p-1)^2} \\
& < p.
\end{align*}
Since $k$ is an integer, $k \leq (p-1)$. Therefore, because 
\[k(p-1)(q-1) + a_{11} (p-1)(q-1) + a_{01} p(p-1)(q-1) = 0,\] we can divide out to get
\[
k + a_{11} + a_{01}p = 0. 
\]

Because we showed earlier that row $01$ is not fixed, we can assume we picked an $X$ where $a_{01} \neq 0$ (there should be exactly two such choices, from the construction of the row NARs). Since $(p-1) \geq k > 0$, this means we must have $a_{01} = -1$, $a_{11} = 1$, and $k = (p-1)$. Therefore,
\begin{align*}
(p-1)^2(q-1) &= a_{12}(p-1) + a_{02}p(p-1) + a_{21} (q-1) + a_{20} q(q-1) \\
(p-1)^4 &= a_{12}(p-1) + a_{02}p(p-1) + a_{21} (p-1)^2 + a_{20} q(p-1)^2 \\
(p-1)^3 &= a_{12} + a_{02}p + a_{21} (p-1) + a_{20} q(p-1).
\end{align*}
It is clear that we need $a_{20} = 1$, else the right-hand-side is not big enough. Thus, we have concluded that if $01$ is on one side of a row NAR, $11$ and $20$ must both be on the other side.


Finally, let $C$ be the set of columns that appear in both $A$ and $B$. This means $v_A = v_{A'} + v_C$ and $v_B = v_{B'} + v_C$, where $A'= \{02, 10, 11\}$ and $B'= \{20, 21\}$. We can compute 
\[
\begin{array}{c|c|c}
 & v_A' & v_B' \\
 \hline
00 & 0 & 0 \\
01 & -2(q-1) & 0 \\
02 & q(q-2) & 0 \\
10 & 0 & 0 \\
11 & (p-2)q & -pq \\
12 & q(p+q-2) & -pq \\
20 & 0 & 0 \\
21 & q(p-2) & pq(p-1) \\
22 & pq(p-1) & pq(p-1). 
\end{array}
\]
Recall that we just proved that there must be exactly $2$ row NARs with $a_{01} \neq 1$ and $a_{11}, a_{20}$ having the opposite sign as $a_{01}$. By construction of the row NARs (recall that they encode which entries in $v_A$ equal which entries in $v_B$ via $\rho$), we obtain that $v_A[01] = v_B[11] = v_B[20]$ and $v_B[01] = v_A[11] = v_A[20]$. This implies  
\[
0 = v_B[11] - v_B[20] = (v_{B'}[11] + v_C[11]) - (v_{B'}[20] + v_C[20]),
\]
so we know that $(v_{B'}[20] - v_{B'}[11]) = (v_C[11] - v_C[20])$, which must also be equal to $(v_{A'}[20] - v_{A'}[11])$ by a symmetric argument. However, $(v_{A'}[20] - v_{A'}[11] = -(p-2)(q)$ and $(v_{B'}[20] - v_{B'}[11]) = pq$, which gives a contradiction. 
\end{proof}

\section{Conclusion}
\label{sec:conclusion}

The main generalizable contributions of this paper are exploiting the multiplicative structure of $M(N)$ to compactly represent spectra of ICGs and identifying row and column NARs induced by a cospectral pair; these techniques can be strengthened and reused in future work on ICGs. Both the $N = p^2q^2$ and $N=p^2q^{n_2}$ results involved nontrivial amounts of ad-hoc tinkering that was hard for us to generalize. 

As an example of the underlying complexity, one of our main strategies was extracting information (in form of column NARs) when we can prove or assume that rows are fixed. However, even in the ``small'' $N=p^2q^2$ case, we have found plausible NARs involving every weight outside of $\{1, q(p-1)(q-1), pq(p-1)(q-1)\}$ (although not simultaneously). This implies it is a priori impossible to assume any rows are fixed outside of rows $\{00, 10, 22\}$. This situation remains an obstacle for bigger $N$.

\section*{Acknowledgments}

We thank Wasin So for introducing us to this problem and valuable discussion.

\bibliographystyle{abbrv}
\bibliography{bibliography}


\end{document}